\documentclass[a4paper]{paper}

\usepackage[utf8]{inputenc}
\usepackage[T1]{fontenc}
\usepackage{lmodern}
\usepackage{tikz}
  \usetikzlibrary{math}
\usepackage{tkz-euclide}
\usepackage{graphicx}      
\usepackage{pgfplots}
\usepackage{csquotes}
  \pgfplotsset{compat=1.13}
\usepackage{subcaption}
\usepackage{amssymb}
\usepackage{amsmath}
\usepackage{amsthm}
\usepackage{mathtools}
\usepackage{hyperref}
\usepackage{cleveref}
  \crefname{figure}{figure}{figures}
  \Crefname{figure}{Figure}{Figures}
  \crefname{subsection}{subsection}{subsections}
  \Crefname{subsection}{Subsection}{Subsections}
\usepackage{acro}
\usepackage[giveninits=true, style=numeric-comp]{biblatex} 
  \addbibresource{bibliography.bib}
\usepackage{microtype}
\usepackage{todonotes}

\makeatletter
\providecommand{\maketitle}{}
\renewcommand{\maketitle}{%
	\par
	\begingroup
	\renewcommand{\thefootnote}{\fnsymbol{footnote}}
	\renewcommand{\@makefnmark}{\hbox to \z@{$^{\@thefnmark}$\hss}}
	\long\def\@makefntext##1{%
		\parindent 1em\noindent
		\hbox to 1.8em{\hss $\m@th ^{\@thefnmark}$}##1
	}
	\thispagestyle{empty}
	\@maketitle
	\@thanks
	\endgroup
	\let\maketitle\relax
	\let\thanks\relax
}

\providecommand{\@maketitle}{}
\renewcommand{\@maketitle}{%
	\vbox{%
		\hsize\textwidth
		\linewidth\hsize
		\vskip 0.1in
		\centering
		{\LARGE\bf \@title\par}
		\def\And{%
		\end{tabular}\hfil\linebreak[0]\hfil%
		\begin{tabular}[t]{c}\bf\rule{\z@}{24\p@}\ignorespaces%
		}
		\def\AND{%
		\end{tabular}\hfil\linebreak[4]\hfil%
		\begin{tabular}[t]{c}\bf\rule{\z@}{24\p@}\ignorespaces%
		}
		\begin{tabular}[t]{c}\bf\rule{\z@}{24\p@}\@author\end{tabular}%
		\vskip 0.3in \@minus 0.1in
	}
}
\makeatother

\newcommand{\PrimS}{\ensuremath{\mathcal X}}

\newcommand{\Real}{\ensuremath{\mathbb{R}}}  
\newcommand{\RealExt}{\ensuremath{\overline{\Real}}} 
\newcommand{\defeq}{\ensuremath{\mathrel{\mathop:} =}}        
\newcommand{\Prox}[2]{\ensuremath{\mathrm{Prox}^{#1}_{#2}}} 
\newcommand{\p}[1]{\ensuremath{\mathord{\left(#1\right)}}} 
\newcommand{\inpr}[2]{\ensuremath{\mathord{\left\langle #1, #2 \right\rangle}}} 
\newcommand{\norm}[1]{\ensuremath{\mathord{\left\| #1 \right \|}}} 
 
\newcommand{\weakto}{\ensuremath{\rightharpoonup}} 
\newcommand{\set}[1]{\ensuremath{\mathord{\left\{ #1 \right\}}}} 
\newcommand{\setcond}[2]{\ensuremath{\set{#1\, \middle| \, #2}}} 
\DeclareMathOperator{\dom}{dom} 
\DeclareMathOperator*{\argmin}{arg\,min} 
\newcommand{\tols}[1]{\ensuremath{\alpha_{#1}}}
\newcommand{\tolna}[1]{\ensuremath{\alpha^{(1)}_{#1}}}
\newcommand{\tolno}[1]{\ensuremath{\alpha^{(2)}_{#1}}}
\newcommand{\OptParamSp}{\ensuremath{\mathcal Y}} 
\DeclareMathOperator{\OptSolver}{\mathcal{R}}
\newcommand{\RegFunc}{S}
\newcommand{\normalize}{\mathcal{N}}

\theoremstyle{plain}
  \newtheorem{theorem}{Theorem}[section]
  
  \newtheorem{corollary}{Corollary}[section]
\theoremstyle{definition}
  
  \newtheorem{remark}{Remark}[section]

\DeclareAcronym{CT}{
  short = CT,
  long = computed tomography}  
\DeclareAcronym{WASP}{
  short = WASP,
  long = {Wallenberg AI, Autonomous Systems and Software Program}}
\DeclareAcronym{ISTA}{
  short = ISTA,
  long = Iterative Shrinkage-Thresholding Algorithm}  
\DeclareAcronym{FISTA}{
  short = FISTA,
  long = fast ISTA} 
\DeclareAcronym{LISTA}{
  short = LISTA,
  long = learned ISTA} 
  
\title{Accelerated Forward-Backward Optimization using Deep Learning}
\author{
  Sebastian Banert\thanks{Equal contribution.}\;\thanks{Work done at KTH.} \vspace{0.5mm} \\
  Department of Automatic Control \\
  Lund University \\
  \href{mailto:sebastian.banert@control.lth.se}{\texttt{sebastian.banert@control.lth.se}}
  \And
  Jevgenija Rudzusika\footnotemark[1] \vspace{0.5mm} \\
  Department of Mathematics \\ 
  KTH Royal Institute of Technology\\
  \href{mailto:jevaks@kth.se}{\texttt{jevaks@kth.se}}
  \And 
  Ozan \"Oktem \vspace{0.5mm} \\
  Department of Mathematics \\ 
  KTH Royal Institute of Technology\\
  \href{mailto:ozan@kth.se}{\texttt{ozan@kth.se}}
  \And
  Jonas Adler\thanks{Work partially done at KTH and Elekta.} \vspace{0.5mm} \\
  DeepMind \\
  \href{mailto:jonasadler@google.com}{\texttt{jonasadler@google.com}} \\
}

\date{}

\begin{document}
\maketitle
  
\begin{abstract}
We propose several deep-learning accelerated optimization solvers with convergence guarantees. We use ideas from the analysis
of accelerated forward-backward schemes like FISTA, but instead of the classical approach of proving convergence for a choice of parameters, such as a step-size, we show convergence whenever the update is chosen in a specific set.
Rather than picking a point in this set using some predefined method, we train a deep neural network to pick the best update. Finally, we show that the method is applicable to several cases of smooth and non-smooth optimization and show superior results to established accelerated solvers.
\end{abstract}

\section{Introduction}
The tremendous growth in computing capacity of hardware and advances in numerical methods has allowed for very large-scale and detailed simulations of complex phenomena.
Much focus is currently on embedding such simulation techniques into procedures for design, control, data assimilation, and inverse modeling (solving inverse problems/parameter estimation).
These tasks are often formalized as continuous variable optimization problems (i.e.\@ nonlinear programs). 

An essential characteristic of the resulting optimization problems is their very large problem size, both in the number of state variables and decision variables. This is dictated by the approximation and discretization of the underlying continuous problem and the number of state variables and state equations, which in many scientific and industrial problems easily reaches the millions.
Another further complicating factor is that the objective involved in the optimization may not necessarily be differentiable, like when sparsity is used in inverse modeling.

For the above stated reasons, \emph{solving large-scale non-smooth optimization problems is one of the key challenges in scientific computing}. 
This has catalyzed a surge in research on methods that utilize specific structure in the problem and take advantage of modern multicore or distributed computing architectures. 
An example is decomposition algorithms that allow considerable computational speedup in parallel computing environments while addressing memory requirements. 
Another is operator splitting techniques, which are designed to solve optimization problems where the objective is a sum of two convex functions, one differentiable with a Lipschitz continuous gradient and the other possibly non-smooth but its proximal points are accessible.
These are now very popular for addressing large-scale non-smooth optimization problems that arise in processing massive datasets.
The latest line of development seeks to further improve upon computational feasibility by combining the aforementioned techniques from convex analysis with contemporary techniques from deep learning.  

\subsection{Mathematical setting}
Let $\PrimS$ denote a Hilbert space with inner product $\inpr{\cdot}{\cdot}$ and corresponding norm $\norm{\cdot}$.
Next, consider minimizing a differentiable convex function $f \colon \PrimS \to \Real$  whose gradient $\nabla f \colon \PrimS \to \PrimS$ is Lipschitz continuous with constant $\beta^{-1} > 0$.
This can be solved by gradient descent \cite{Cauchy:1847} or accelerated methods based on an overshooting idea formulated by Nesterov \cite{Nesterov:1983}. 

The objective in the aforementioned minimization is smooth, but as already mentioned, many applications involve minimizing a non-smooth objective function.
To handle the latter, there has been significant effort to develop theory and algorithms suitable for non-smooth optimization. 
These often utilize further structure in the objective. In particular, \emph{splitting} techniques have been developed to solve optimization problems where the (non-smooth) objective function is a sum of two convex functions, one smooth and the other non-smooth.
Stated mathematically, this refers to optimization problems of the form 
\begin{equation}\label{eq:smooth_non-smooth_problem}
  \min_{x\in \PrimS} f\p{x} + g\p{x}
\end{equation}
where $f \colon \PrimS \to \Real$ is a differentiable convex function and $g \colon \PrimS \to \RealExt$ is a proper, convex and lower semi-continuous, but possibly non-smooth function.
Algorithms to solve \cref{eq:smooth_non-smooth_problem} include the forward-backward splitting method \cite{Goldstein:1964} and its accelerated variants like \ac{FISTA} \cite{BeckTeboulle:2009}. 
Much attention has also been directed towards approaches that use learning techniques to improve the speed, like in \ac{LISTA} \cite{GregorLecun:2010}, see the survey \cite{chen2021learning} and references therein.
However, the latter methods lack convergence guarantees. 
Another approach is to learn the parameters of a general optimization solver \cite{BanertRinghAdlerKarlssonOktem:2018}, but in this case there is a relatively small number of free parameters, which limits the potential of the scheme to be adapted to a particular problem class. 
To address this and increase the number of degrees of freedom in the learning, we will in this paper consider a \emph{deviation-based} approach. 
Instead of learning parameters of a solver, we learn an entire updating function and then use that in an optimization solver.
To ensure convergence, we constrain the updating function to sufficiently small deviations from known standard algorithms.

\subsection{Related work}
The overall principle that underlies application of machine learning for solving optimization problems is to use principles from statistical decision theory to select the \enquote{best} solver by training against data. 
Solvers are parametrized and \enquote{best} means selecting the solver parameters that lead to the smallest average error when applied to a collection of similar optimization problems.

Several authors have investigated the idea of deriving optimization schemes from statistical learning. One of the most notable schemes is \ac{LISTA} introduced in \cite{GregorLecun:2010}.
This scheme was inspired by the \ac{ISTA}, which is a popular algorithm for solving the optimization in sparse coding. 
Each iteration of \ac{ISTA} is a combination of matrix operations followed by a thresholding function. 
Thus, when stopped after fixed number of iterations, the algorithm resembles a forward pass through a neural network. 
The idea in \ac{LISTA} is to replace the handcrafted matrices in \ac{ISTA} with  learned ones that are trained in a supervised manner, provided that the ground truth solution to the optimization problem is available. 
In contrast, we follow \cite{BanertRinghAdlerKarlssonOktem:2018} and train the parameters of our optimization scheme using an unsupervised loss that is equal to the value of the objective function attained after a fixed number of iterations.

The above idea of learning a truncated optimization scheme in a unsupervised manner is also studied in machine learning  \cite{DBLP:journals/corr/AndrychowiczDGH16, li2016learning,wichrowska2017learned,bello2017neural,Maheswaranathan:2020aa}. 
The main use case here is to solve the non-convex optimization problem that arises when training neural networks.
Here the objective is to replace conventional optimizers with learned optimizers that are faster/better.
This is achieved by parametrizing updates at each iteration with recurrent neural networks and training the associated hyper-parameters either by variants of stochastic gradient descent \cite{wichrowska2017learned,DBLP:journals/corr/AndrychowiczDGH16}, reinforcement learning \cite{li2016learning,bello2017neural} or by using evolutionary strategies \cite{metz2020tasks}. 
The challenge is to ensure the learned optimizer generalizes from one optimization task to another. 
A difficulty here is to identify a class of optimization problems that share similar structure.
We instead consider (large) classes of optimization problems that \textit{naturally} share similarities, like those in image reconstruction for \ac{CT} and regularization of other inverse problems. 
This makes it easier to specify the generalization requirement.

Convergence of the above mentioned learned optimization schemes is yet to be mathematically proven.
Instead, their usage relies on (very strong) heuristics, so these data driven approaches can not be seen as optimization solvers in the classical sense. As we consider convex problems, our approach is not applicable for training neural networks, but we can on the other hand provide convergence guarantees.

Some authors have investigated optimal parameter choices in \ac{FISTA} when iterations are truncated, see e.g.\@ \cite{liang2018improving} and notably \cite{DBLP:journals/corr/abs-1808-10038} that proves linear convergence of a related scheme under some further assumptions (sparsity, finite dimension, optimal training). Another approach is \cite{BanertRinghAdlerKarlssonOktem:2018}, which proves convergence of a learned primal-dual scheme under weaker assumptions. The analysis is however only performed in the parametric case, e.g.\@ with an explicit form of the updating operation.

A key element in the above approaches to optimization lies in using neural networks obtained by unrolling an existing optimization scheme.
This provides the opportunity for domain adaptation and such network architectures have also been successfully used to solve challenging inverse problems, but this typically involves training them in a supervised manner \cite{Adler:2017aa, Adler:2018aa, PockMRI, DeepADMMNET}. 
Note that even though these networks have an architecture obtained by unrolling an optimization solver, they are not themselves optimization solvers, see \cite[Section~5]{Arridge:2019aa} for further details.  
Another example from scientific computing is \cite{hsieh2018learning}, which deals with solving partial differential equations. 
Hence, unrolling should primarily be seen as a way to assemble an appropriately domain adapted deep neural network architecture \cite[Section~4.9.1]{Arridge:2019aa}.

An attempt to analyze and interpret learned optimizers by considering linear approximations of optimizers close to the stationary point is presented in \cite{Maheswaranathan:2020aa}. 

\subsection{Overview of the paper}
\Cref{sec:background} recalls the necessary notions from convex optimization. 
This is followed by \cref{sec:smooth}, which establishes a worst-case convergence result for a non-parametric gradient descent scheme for smooth optimization problems.
Next, \cref{sec:non-smooth} establishes a slightly weaker convergence theorem for a non-parametric forward-backward scheme applicable for non-smooth problems. \Cref{sec:methods} discusses the details of learning a non-parametric optimization scheme. \Cref{sec:applications} concludes with numerical experiments from an imaging application.

\section{Background from optimization theory}\label{sec:background}
The aim here is to introduce the notation and basic notions from optimization theory that are used throughout the paper. The main reference is  \cite{BauschkeCombettes:2017}, but the reader may also consult \cite{Nesterov:2004, Rockafellar:1970} for related results.

A function $f\colon \PrimS \to \RealExt = \Real \cup \set{\pm\infty}$ is
\emph{proper} if its domain is nonempty and it does not take the value $-\infty$, i.e.\@ if
\[
\dom f \defeq \setcond{x\in \PrimS}{f\p{x} < +\infty} \neq \varnothing
\quad\text{and}\quad
f\p{x} > -\infty \text{ for all $x\in \PrimS$.}
\]
It is \emph{convex} if 
$
f\p{\p{1 - \lambda} x + \lambda y} \leq \p{1 - \lambda} f\p{x} + \lambda
f\p{y}
$
holds for all $x, y\in \PrimS$ and $0 \leq \lambda \leq 1$ and \emph{strictly convex} if this inequality is strict for $x \neq y$ and $0 < \lambda < 1$. A strictly convex function has at most one minimizer \cite[Corollary~11.9]{BauschkeCombettes:2017}.
Next, the \emph{sub-differential} of a convex function $f\colon \PrimS \to \RealExt$ is a (set-valued) mapping $\partial f \colon \PrimS \to 2^{\PrimS}$ defined as 
\[
\partial f\p{x} \defeq 
\begin{cases}
\setcond{y\in \PrimS}{\forall z\in \PrimS: f\p{z}
\geq f\p{x} + \inpr{y}{z - x}}
& \text{if $f\p{x} \in \Real$}
\\
\emptyset & \text{otherwise.}
\end{cases}
\]
If $f$ is Fr\'echet differentiable at $x$, then $\partial f\p{x}$ consists of a single element, namely the gradient of $f$ at $x$ \cite[Proposition 17.31]{BauschkeCombettes:2017}, i.e.\@ $\partial f\p{x} = \set{\nabla f\p{x}}$.
Finally, the \emph{proximal point} of a proper, convex, and lower semi-continuous function $f\colon \PrimS \to \RealExt$ at $x\in\PrimS$ with \emph{step-size} $\gamma > 0$ is defined as
\[
\Prox{\gamma}{f} \p{x} \defeq  \argmin \setcond{f\p{z} + \frac{1}{2\gamma} \norm{z - x}^2}{z\in \PrimS}.
\]
The proximal point can be used to characterize points in the sub-differential \cite[Proposition 16.44]{BauschkeCombettes:2017}:
\[ p = \Prox{\gamma}{f}\p{x}
   \iff 
   \frac{1}{\gamma}\p{x - p} \in \partial f\p{p}.
\]   
The operator $\Prox{\gamma}{f}$ is then a \emph{firmly non-expansive} single-valued mapping from $\PrimS$ to $\PrimS$ \cite[Proposition 12.28]{BauschkeCombettes:2017},
\begin{multline*}
\norm{\Prox{\gamma}{f}\p{x_1} - \Prox{\gamma}{f}\p{x_2}}^2 \\
\leq \inpr{\Prox{\gamma}{f}\p{x_1} - \Prox{\gamma}{f}\p{x_2}}{x_1 - x_2}
\text{ for all $x_1, x_2 \in \PrimS$.} 
\end{multline*}
Furthermore, $p = \Prox{\gamma}{f}\p{x}$ if and only if the following variational inequality holds \cite[Proposition 12.26]{BauschkeCombettes:2017}:
\begin{equation}
f\p{z} \geq f\p{p} + \frac{1}{\gamma} \inpr{x - p}{z - p}
\quad\text{for all $z\in\PrimS$.}
\end{equation}

If $\nabla f$ is Lipschitz continuous with constant $\beta^{-1}$, then the following inequality, which is sometimes called \emph{descent lemma} (see \cite[Theorem 18.15]{BauschkeCombettes:2017}), holds:
\begin{equation}\label{eq:descent_lemma}
  f\p{x + d} \leq f\p{x} + \inpr{\nabla f\p{x}}{d} + \frac{1}{2\beta} \norm{d}^2
  \quad\text{$x, d \in \PrimS$.}
\end{equation}
Likewise, if $f$ is in addition assumed to be convex \cite[Theorem 2.1.5]{Nesterov:2004}:
\begin{equation}\label{eq:Lipschitz_estimation}
  f\p{x} + \inpr{\nabla f\p{x}}{y - x} + \frac{\beta}{2} \norm{\nabla f\p{x} - \nabla f\p{y}}^2 \leq f\p{y}
  \quad\text{for all $x, y \in \PrimS$,}
\end{equation}

\section{Deviation-based optimization schemes}
We call an optimization scheme \emph{parameter-based} if convergence criteria are expressed as constraints on its parameters. 
An example of a parameter-based optimization scheme is the classical gradient descent method for minimizing a convex differentiable function $f: \PrimS \to \Real$ where $\nabla f\colon \PrimS \to \PrimS$ is $\beta^{-1}$-Lipschitz. To see this, note that such a method relies on an updating scheme of the following form:
\begin{equation}\label{eq:steepest_descent}
  x_{n+1} \defeq x_n - \beta \p{1 + t_n} \nabla f\p{x_n}.
\end{equation}
For convergence to a minimizer for $f$, $t_n$ is a parameter that needs to satisfy $-1 < t_n < 1$ and $\sum_{n = 0}^\infty \p{1  + t_n} \p{1 - t_n} = +\infty$ \cite[Proposition~4.39, Corollary~5.16, and Corollary~18.17]{BauschkeCombettes:2017}. 
Thus, a convergence criterion for the above scheme is expressible in terms of a specific choice of parameters, so we are dealing with a parameter-based optimization method. 

In fact, almost all methods in classical optimization theory are written as schemes of this form. 
Furthermore, it has been considered a virtue to have as few parameters as possible (preferably none).
On the other hand, recent advancements in machine learning allow us to deal with extremely large numbers of parameters while avoiding over-fitting effects and achieve significant improvements over established methods. While optimization schemes can be formulated using neural networks, and their convergence can be studied in terms of their parameters, this is typically very cumbersome and restricts neural network architectures to, e.g., one layer networks \cite{BanertRinghAdlerKarlssonOktem:2018}.

To allow for a wider range of neural network architectures, we need to move away from convergence guarantees that involve the parameters of the scheme.
The idea is to give the network as much freedom as possible while still retaining the stability properties of the underlying optimization method. 
Such schemes that do not impose an a-priori restriction on the number of parameters will be referred to as \emph{deviation-based}.

To introduce a deviation-based scheme for smooth optimization as in the setting above, we modify the steepest descent method~\cref{eq:steepest_descent} by introducing a sequence $\p{\Delta x_n}_{n \geq 0}$ of deviations:
\begin{equation}\label{eq:gradient-deviation-based}
    x_{n+1} \defeq x_n - \beta\p{\nabla f\p{x_n} + \Delta x_n}
    \quad\text{for some deviation/offset $\Delta x_n \in \PrimS$.}
\end{equation}
The classical convergence proof for the gradient descent implies convergence if $\Delta x_n = t \nabla f\p{x_n}$ for $-1 < t < 1$. 
This is, however, a one-dimensional set and hardly interesting to do learning on, and in particular not deep learning.
In what follows, we consider schemes of the above form and prove convergence for sets of updates $\Delta x_n \in \PrimS$ that have full dimension, thus allowing for a much greater degree of freedom in selecting the updates. 
The learning itself will be discussed in~\cref{sec:methods}.

\subsection{Deviation-based optimization schemes for smooth optimization}\label{sec:smooth}
In previous literature, the sequence $\p{\Delta x_n}_{n \geq 0}$ of deviations in~\cref{eq:gradient-deviation-based} was used to model errors in the evaluation of the gradient, see for example \cite{Briceno-AriasCombettes:2011, CombettesPesquet:2012, Vu:2013}. In our setting, however, we are interested in consciously choosing $\Delta x_n \in \PrimS$ of~\cref{eq:steepest_descent} which will preserve or, by a clever choice, might improve its convergence properties.

As we show below the condition $\norm{\Delta x_n} \leq \norm{\nabla f\p{x_n}}$ guarantees that the function value $f\p{x_n}$ is non-increasing, and it is the weakest condition which guarantees this property.
In particular, we will show that the function values $f\p{x_n}$ converge to the minimal value of the function $f$.
Special cases of the following theorem are discussed in~\cref{rem:smooth_special_cases} below.

\begin{theorem}\label{thm:smooth_main_convergence}
  Consider minimizing $f\colon \PrimS \to \Real$ that is convex and differentiable with a gradient that is $\beta^{-1}$-Lipschitz continuous for some $\beta > 0$.
  Next, consider the iterative scheme
  \begin{equation}\label{eq:smooth_iteration}
    x_{n+1} \defeq x_n - \beta\p{\nabla f\p{x_n} + \Delta x_n}
    \quad\text{for all $n\geq 0$,}
  \end{equation}
  where $x_0 \in \PrimS$ is some fixed initial point and $\p{\Delta x_n}_{n\geq 0}$ is any sequence in $\PrimS$ such that
  \begin{equation}\label{eq:smooth_limsup}
    \norm{\Delta x_n} \leq \tols{n} \norm{\nabla f\p{x_n}}
  \end{equation}
  for some sequence $\p{\tols{n}}_{n \geq 0}$ of positive numbers with $\tols{n} \leq 1$ for all $n \geq 0$.
  \begin{enumerate}
    \item \label{item:thm:smooth_main_convergence:decreasing} 
      Then $\p{f\p{x_n}}_{n\geq 0}$ is a non-increasing sequence of objective function values.
    \item \label{item:thm:smooth_main_convergence:rate}
      The following inequality holds:
      \begin{equation}\label{eq:smooth_variable_rate}
        f\p{x_n} - f\p{\bar x}\leq \frac{1}{2\beta} \p{\prod_{k = 0}^{n - 1} \p{1 - \frac{1 - \tols{k}^2}{k + 2}}} \norm{x_0 - \bar x}^2
        \text{ for all $n\geq 0$,}
      \end{equation}
      where $\bar x$ denotes a global minimizer of $f$.
    \item \label{item:thm:smooth_main_convergence:objective_convergence}
      If we further assume that
      \begin{equation}\label{eq:smooth_nonsummability_condition}
        \sum_{n = 0}^\infty \frac{1 - \tols{n}}{n + 2} = + \infty.
      \end{equation}
      Then $f\p{x_n} \to f\p{\bar x}$ as $n\to +\infty$ with $\bar x$ denoting a global minimizer of $f$.
    \end{enumerate}
\end{theorem}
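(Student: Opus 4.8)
The plan is to derive everything from a single robust descent estimate. First I would substitute the step $x_{n+1} - x_n = -\beta\p{\nabla f\p{x_n} + \Delta x_n}$ into the descent lemma \cref{eq:descent_lemma}; expanding the inner product and the squared norm, the two occurrences of $\inpr{\nabla f\p{x_n}}{\Delta x_n}$ cancel, leaving $f\p{x_{n+1}} \le f\p{x_n} - \frac{\beta}{2}\p{\norm{\nabla f\p{x_n}}^2 - \norm{\Delta x_n}^2}$. Because the cross term drops out, only the magnitude of $\Delta x_n$ matters, and \cref{eq:smooth_limsup} with $\tols{n} \le 1$ gives $f\p{x_{n+1}} \le f\p{x_n} - \frac{\beta}{2}\p{1 - \tols{n}^2}\norm{\nabla f\p{x_n}}^2 \le f\p{x_n}$. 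This proves part~\ref{item:thm:smooth_main_convergence:decreasing} and simultaneously yields the quantified per-step decrease that will drive the rate.

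For part~\ref{item:thm:smooth_main_convergence:rate} I would argue by induction on $n$, writing $h_n \defeq f\p{x_n} - f\p{\bar x}$ and $R \defeq \norm{x_0 - \bar x}$. The base case $h_0 \le \frac{1}{2\beta} R^2$ is \cref{eq:descent_lemma} applied at $\bar x$ with $d = x_0 - \bar x$, since $\nabla f\p{\bar x} = 0$; note that the empty product in \cref{eq:smooth_variable_rate} equals $1$, so $n = 0$ matches. The quantitative ingredient linking the decrease to the gap is the refined convexity inequality \cref{eq:Lipschitz_estimation} evaluated at $x = x_n$, $y = \bar x$, which gives $\inpr{\nabla f\p{x_n}}{x_n - \bar x} \ge h_n + \frac{\beta}{2}\norm{\nabla f\p{x_n}}^2$; combined with Cauchy--Schwarz this becomes a lower bound on $\norm{\nabla f\p{x_n}}$ in terms of $h_n$ and $\norm{x_n - \bar x}$, which I would feed back into the descent estimate.

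The subtlety --- and what I expect to be the main obstacle --- is that, unlike plain gradient descent, the scheme is not Fej\'er monotone: expanding $\norm{x_{n+1} - \bar x}^2$ produces a cross term $\inpr{\Delta x_n}{x_n - \bar x}$ whose sign is not controlled, so $\norm{x_n - \bar x}$ may grow and one cannot simply bound it by $R$. The resolution is to exploit the structural trade-off encoded in \cref{eq:smooth_limsup}: enlarging $\norm{\Delta x_n}$, the only way to push the iterate away from $\bar x$, forces $\norm{\nabla f\p{x_n}}$ to be large, which by the estimate of the previous paragraph forces a correspondingly large guaranteed decrease of $h_n$. Concretely, I would combine the distance recurrence with the co-coercivity bound $\norm{x_n - \beta\nabla f\p{x_n} - \bar x}^2 \le \norm{x_n - \bar x}^2 - 2\beta h_n$, absorb the deviation cross term by a Young inequality, and pay for it using the per-step decrease $\frac{\beta}{2}\p{1 - \tols{n}^2}\norm{\nabla f\p{x_n}}^2$ together with $\norm{\Delta x_n}^2 \le \tols{n}^2 \norm{\nabla f\p{x_n}}^2$. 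Summing the resulting telescoping inequality over $k = 0, \dots, n - 1$ and using the monotonicity from part~\ref{item:thm:smooth_main_convergence:decreasing} to replace the sum of gaps by a multiple of $h_n$ should collapse to exactly the factor $1 - \frac{1 - \tols{n}^2}{n + 2}$ per step, i.e.\ the product in \cref{eq:smooth_variable_rate}; I would confirm the one-step worst case by a scalar optimization over the admissible $\Delta x_n$.

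Finally, part~\ref{item:thm:smooth_main_convergence:objective_convergence} follows from the rate once I show that the product $P_n \defeq \prod_{k = 0}^{n - 1}\p{1 - \frac{1 - \tols{k}^2}{k + 2}}$ tends to $0$. Taking logarithms and using $\log\p{1 - t} \le -t$ together with $1 - \tols{k}^2 = \p{1 - \tols{k}}\p{1 + \tols{k}} \ge 1 - \tols{k}$, I would bound $\log P_n \le -\sum_{k = 0}^{n - 1}\frac{1 - \tols{k}}{k + 2}$, which diverges to $-\infty$ by the non-summability hypothesis \cref{eq:smooth_nonsummability_condition}. Hence $P_n \to 0$, and the rate \cref{eq:smooth_variable_rate} gives $f\p{x_n} \to f\p{\bar x}$.
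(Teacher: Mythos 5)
Your part~\ref{item:thm:smooth_main_convergence:decreasing} is correct: substituting the step into \cref{eq:descent_lemma} does make the cross terms $\inpr{\nabla f\p{x_n}}{\Delta x_n}$ cancel, giving $f\p{x_{n+1}} \leq f\p{x_n} - \frac{\beta}{2}\p{1 - \tols{n}^2}\norm{\nabla f\p{x_n}}^2$, and this is in fact more elementary than the paper, which instead derives the sharper consecutive-step estimate \cref{eq:smooth:consecutive} from \cref{eq:Lipschitz_estimation} (featuring $\nabla f\p{x_{n+1}}$ rather than $\nabla f\p{x_n}$) because that sharper form is what gets reused in the induction. Your part~\ref{item:thm:smooth_main_convergence:objective_convergence} is exactly the paper's argument: $\log\p{1-t} \leq -t$ plus $1 - \tols{k}^2 \geq 1 - \tols{k}$ and the hypothesis \cref{eq:smooth_nonsummability_condition}.

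The genuine gap is in part~\ref{item:thm:smooth_main_convergence:rate}, which is the core of the theorem, and it is precisely at the point where you write that the telescoped inequality \enquote{should collapse to exactly the factor $1 - \frac{1 - \tols{n}^2}{n+2}$ per step.} You correctly diagnose the loss of Fej\'er monotonicity, but your proposed repair --- Young absorption of $\inpr{\Delta x_n}{x_n - \bar x}$, an additive telescoping sum, and the monotonicity trick replacing $\sum_{k < n} h_k$ by $n\, h_n$ --- cannot produce the stated product-form bound. First, the sum-plus-monotonicity device yields bounds of harmonic-sum shape, $h_n \leq C / \sum_{k} \p{1 - \tols{k}^2}$, which is structurally different from $\prod_{k}\bigl(1 - \frac{1 - \tols{k}^2}{k+2}\bigr)$; for constant $\tols{} $ the former would claim $O\p{1/n}$ with an inflated constant, while the actual rate established (and presumably achievable) is the slower $O\p{n^{-(1-\tols{})}}$. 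Second, each Young absorption inflates $\norm{x_n - \bar x}^2$ by a factor $\p{1 + \epsilon_n}$, and paying for the complementary term $\tols{n}^2 \norm{\nabla f\p{x_n}}^2 / \epsilon_n$ out of the per-step decrease forces $\epsilon_n \sim 1/n$; tracking this accumulation consistently is exactly what your sketch asserts rather than performs. The paper's resolution is a \emph{strengthened induction hypothesis}, \cref{eq:smooth_strong_rate}, which appends the correction term $-\frac{1}{2\beta\p{n+1}}\norm{x_n - \bar x - \beta\nabla f\p{x_n}}^2$ to the bound; the induction step splits $f\p{x_{n+1}} - f\p{\bar x}$ as a convex combination with weights $\frac{1 - \tols{n}^2}{n+2}$ and $1 - \frac{1 - \tols{n}^2}{n+2}$, applies \cref{eq:Lipschitz_estimation} at the pairs $\p{x_{n+1}, \bar x}$ and $\p{x_{n+1}, x_n}$, and closes by completing a square whose coefficient $\frac{\beta\p{n+1}}{\tols{n}^2}$ encodes the iteration-dependent Young balancing you gesture at. Without stating and propagating that correction term (note your base case only establishes the weak form of \cref{eq:smooth_variable_rate} at $n = 0$, not the strengthened \cref{eq:smooth_strong_rate}), the induction does not close; and a per-step \enquote{scalar optimization over admissible $\Delta x_n$} cannot certify the rate either, since the worst case couples $\norm{x_n - \bar x}$, $\norm{\nabla f\p{x_n}}$ and $h_n$ across iterations through exactly this Lyapunov-type quantity.
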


\begin{remark}\label{rem:smooth_special_cases}
Choosing $\tols{n} = \tols{}$ for all $n$ in~\cref{thm:smooth_main_convergence} simplifies \cref{eq:smooth_variable_rate} to
\begin{equation}\label{eq:smooth_fixed_rate}
\begin{split}
    f\p{x_n} - f\p{\bar x}
    &\leq \frac{1}{2\beta} \p{\prod_{k = 0}^{n - 1} \p{\frac{k + 1 + \tols{}}{k + 2}}} \norm{x_0 - \bar x}^2
    \\
    &= \frac{\Gamma\p{n + \tols{} + 1}}{2 \beta \Gamma \p{n + 2} \Gamma \p{1 + \tols{}}} \norm{x_0 - \bar x}^2.
\end{split}
\end{equation}
Hence, using Stirling's formula,
\[
    \frac{f\p{x_n} - f\p{\bar x}}{\norm{x_0 - \bar x}^2} = O\p{n^{- \p{1 - \tols{}}}} \qquad \text{as } n \to \infty.
\]
When $\tols{} = 0$, i.e.\@ if $\Delta x_n = 0$ for $n = 0, 1, \dots$, we get
\[
    f\p{x_n} - f\p{\bar x} = \frac{1}{2 \beta \p{n + 1}} \norm{x_0 - \bar x}^2.
\]
Hence, we have (asymptotically) recovered the optimal rate from~\cite[Theorem 1]{DroriTeboulle:2014} up to a constant of $2$.

Note also that all the calculations work for a smaller $0 < \beta' <\beta$ since  $1/\beta'$ will also be a Lipschitz constant of $\nabla f$. In addition to the convergence of the function values, we are also able to show the convergence of the iteration sequence with stronger assumptions on the objective function $f$:
\end{remark}
\begin{corollary}\label{cor:smooth}
If, in addition to the assumptions in~\cref{item:thm:smooth_main_convergence:objective_convergence} of~\cref{thm:smooth_main_convergence}, the objective function $f$ is coercive, i.e., $f\p{x} \to +\infty$ as $\norm{x} \to +\infty$, and strictly convex, then the sequence $\p{x_n}_{n\geq 0}$ converges weakly to the unique minimizer of $f$.
\end{corollary}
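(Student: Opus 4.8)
The plan is to run the standard weak-convergence argument for descent sequences in a Hilbert space: first show that $\p{x_n}_{n\geq 0}$ is bounded, then show that every weak cluster point must coincide with the minimizer $\bar x$, and finally conclude that a bounded sequence with a single weak cluster point converges weakly to it. Throughout I may use that, by \cref{thm:smooth_main_convergence}, the values $f\p{x_n}$ are non-increasing and, under the non-summability assumption in \cref{item:thm:smooth_main_convergence:objective_convergence}, satisfy $f\p{x_n} \to f\p{\bar x}$; strict convexity guarantees $\bar x$ is the unique minimizer.

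First I would establish boundedness. By \cref{item:thm:smooth_main_convergence:decreasing}, the sequence $\p{f\p{x_n}}_{n\geq 0}$ is non-increasing, so $f\p{x_n} \leq f\p{x_0}$ for all $n$, i.e.\@ every iterate lies in the sublevel set $\setcond{x\in\PrimS}{f\p{x} \leq f\p{x_0}}$. Coercivity forces this sublevel set to be bounded, hence $\p{x_n}_{n\geq 0}$ is bounded. Since $\PrimS$ is a Hilbert space and therefore reflexive, every subsequence of $\p{x_n}_{n\geq 0}$ has a further subsequence converging weakly to some limit.

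Next I would identify the cluster points. Let $x_{n_k} \weakto \tilde x$ be any weakly convergent subsequence. Because $f$ is proper, convex and lower semi-continuous, it is weakly lower semi-continuous (e.g.\@ \cite{BauschkeCombettes:2017}), so $f\p{\tilde x} \leq \liminf_{k} f\p{x_{n_k}} = \lim_{n} f\p{x_n} = f\p{\bar x}$, where the last equality is \cref{item:thm:smooth_main_convergence:objective_convergence}. As $\bar x$ minimizes $f$, the reverse inequality $f\p{\tilde x} \geq f\p{\bar x}$ is automatic, so $f\p{\tilde x} = f\p{\bar x}$ and $\tilde x$ is itself a minimizer. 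Strict convexity yields a unique minimizer, whence $\tilde x = \bar x$; thus every weak cluster point of $\p{x_n}_{n\geq 0}$ equals $\bar x$.

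Finally I would upgrade this to weak convergence of the whole sequence. A bounded sequence whose only weak cluster point is $\bar x$ must converge weakly to $\bar x$: otherwise there would exist $\phi\in\PrimS$ and $\varepsilon > 0$ and a subsequence with $\abs{\inpr{\phi}{x_{n_j} - \bar x}} \geq \varepsilon$, but by boundedness this subsequence has a further weakly convergent subsequence whose limit is again $\bar x$, making $\inpr{\phi}{x_{n_j} - \bar x} \to 0$ along it and contradicting the bound. I expect the only points needing care to be the justification of weak lower semi-continuity (where convexity and lower semi-continuity, not mere continuity, are essential) and this final unique-cluster-point step; the rest is routine. It is worth noting that coercivity is used only to obtain boundedness and strict convexity only to force uniqueness of the minimizer.
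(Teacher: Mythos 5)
Your proof is correct and follows essentially the same route as the paper: boundedness of $\p{x_n}_{n\geq 0}$ from the monotone descent of $\p{f\p{x_n}}_{n\geq 0}$ combined with coercivity, identification of every weak sequential cluster point with the unique minimizer via weak lower semi-continuity of the convex function $f$ together with $f\p{x_n}\to f\p{\bar x}$, and finally the unique-cluster-point argument. The only cosmetic difference is that you prove the last step (bounded sequence with a single weak cluster point converges weakly) by hand, whereas the paper simply cites \cite[Lemma~2.46]{BauschkeCombettes:2017}.
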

\begin{proof}
The sequence $\p{x_n}_{n \geq 0}$ is bounded: by~\cref{item:thm:smooth_main_convergence:decreasing} of~\cref{thm:smooth_main_convergence}, the sequence $\p{f\p{x_n}}_{n\geq 0}$ is bounded from above by $f\p{x_0} \in \Real$. The asserted boundedness then follows from the coercivity of $f$.

If $x\in \PrimS$ is a weak sequential cluster point, i.e., there exists a subsequence $\p{x_{n_k}}_{k \geq 0}$ of $\p{x_n}_{n \geq 0}$ with $x_{n_k} \weakto x$, then \cite[Theorem 9.1]{BauschkeCombettes:2017} and~\cref{item:thm:smooth_main_convergence:objective_convergence} of~\cref{thm:smooth_main_convergence} give $f\p{x} \leq \lim_{n \to \infty} f\p{x_{n_k}} = \inf_{x \in \PrimS} f\p{x}$, i.e., $x$ is the unique minimizer of $f$. The statement of the corollary then follows from \cite[Lemma~2.46]{BauschkeCombettes:2017}.
\end{proof}

\begin{proof}[Proof of~\cref{thm:smooth_main_convergence}]
  For all $n \geq 0$ we have, by \cref{eq:Lipschitz_estimation},
  \begin{align}\label{eq:smooth:consecutive}
    \hspace{-0.3cm}
    f\p{x_{n+1}} - f\p{x_n}
    &\leq 
    \inpr{\nabla f\p{x_{n+1}}}{x_{n+1} - x_n} 
       - \frac{\beta}{2} \norm{\nabla f\p{x_{n+1}} - \nabla f\p{x_n}}^2 
    \nonumber \\
    &= -\beta \inpr{\nabla f\p{x_{n+1}}}{\Delta x_n} 
       - \frac{\beta}{2} \norm{\nabla f\p{x_{n+1}}}^2 
       - \frac{\beta}{2} \norm{\nabla f\p{x_n}}^2 
    \nonumber \\
    &=  - \frac{\beta}{2} \norm{\nabla f\p{x_{n+1}} 
       + \Delta x_n}^2 + \frac{\beta}{2} \p{\norm{\Delta x_n}^2 
       - \norm{\nabla f\p{x_n}}^2}. 
  \end{align}
  The proof of \cref{item:thm:smooth_main_convergence:decreasing} now follows from the observation that 
  \[
    \norm{\Delta x_n}^2 - \norm{\nabla f\p{x_n}}^2 \leq - \p{1 - \tols{n}^2}
    \norm{\nabla f\p{x_n}}^2.
  \]
  
  We will next prove the formula in \cref{eq:smooth_variable_rate} by proving the following stronger relation:
  \begin{multline}\label{eq:smooth_strong_rate}
    f\p{x_n} - f\p{\bar x} 
    \leq \frac{1}{2\beta} \p{\prod_{k = 0}^{n - 1} 
      \p{1 - \frac{1 - \tols{k}^2}{k + 2}}} \norm{x_0 - \bar x}^2 
    \\
    - \frac{1}{2 \beta \p{n + 1}} \norm{x_n - \bar x - \beta \nabla f\p{x_n}}^2
  \end{multline}
  where $\bar x \in \PrimS$ denotes a global minimizer of $f$, i.e.\@ $\nabla f\p{\bar x} = 0$. 
  To prove the above, we proceed by induction and begin by considering the case $n = 0$. 
  By \cref{eq:Lipschitz_estimation}, we have
  \begin{align*}
    f\p{x_0} - f\p{\bar x}
    &\leq \inpr{\nabla f\p{x_0}}{x_0 - \bar x} - \frac{\beta}{2} \norm{\nabla f\p{x_0}}^2 \\
    &= \frac{1}{2 \beta} \norm{x_0 - x}^2 - \frac{1}{2 \beta} \norm{x_0 - \bar x - \beta \nabla f\p{x_0}}^2.
  \end{align*}
  
  This proves the claim for $n=0$, i.e.\@ the induction basis holds.
  Next, assume \cref{eq:smooth_strong_rate} holds for some $n \geq 0$ and we seek to prove that it holds for $n+1$. Then
  \begin{multline*}
    f\p{x_{n+1}} - f\p{\bar x} = \frac{1 - \tols{n}^2}{n + 2} \p{f\p{x_{n+1}} -
      f\p{\bar x}} \\
    + \p{1 - \frac{1 - \tols{n}^2}{n + 2}} \p{f\p{x_{n+1}} - f\p{x_n}} + \p{1 - \frac{1 - \tols{n}^2}{n + 2}} \p{f\p{x_n} - f\p{\bar x}}
  \end{multline*}
  Here, we estimate the first two terms with \cref{eq:Lipschitz_estimation} and
  the last one with the induction hypothesis \cref{eq:smooth_strong_rate}. This
  results in
  \begin{multline*}
    f\p{x_{n+1}} - f\p{\bar x} \leq \frac{1 - \tols{n}^2}{n + 2} \p{\inpr{\nabla
        f\p{x_{n+1}}}{x_{n+1} - \bar x} - \frac{\beta}{2} \norm{\nabla f\p{x_{n+1}}}^2} 
    \\ \shoveleft{\qquad
    + \p{1 - \frac{1 - \tols{n}^2}{n + 2}} \p{\inpr{\nabla f\p{x_{n+1}}}{x_{n+1}
      - x_n} - \frac{\beta}{2} \norm{\nabla f\p{x_{n+1}} - \nabla f\p{x_n}}^2} 
    }
    \\ \shoveleft{\qquad
    + \frac{1}{2\beta} \p{\prod_{k = 0}^n 
        \p{1 - \frac{1 - \tols{k}^2}{k + 2}}} \norm{x_0 - \bar x}^2 
  }
  \\
  - \p{1 - \frac{1 - \tols{n}^2}{n + 2}} 
        \frac{1}{2 \beta \p{n + 1}} 
        \norm{x_n - \bar x - \beta \nabla f\p{x_n}}^2.
  \end{multline*}
  Using \cref{eq:smooth_iteration} and reordering the terms gives
  \begin{multline}\label{eq:smooth_proof_1}
    f\p{x_{n+1}} - f\p{\bar x} \leq  - \beta \inpr{\nabla f\p{x_{n+1}}}{\Delta x_n} - \frac{\beta}{2} \norm{\nabla f\p{x_{n+1}}}^2 
    \\ \shoveleft{\qquad
    + \frac{1 - \tols{n}^2}{n + 2} \inpr{\nabla f\p{x_{n+1}}}{x_n - \bar x - \beta \nabla f\p{x_n}}
    } 
    \\ \shoveleft{\qquad
    - \p{1 - \frac{1 - \tols{n}^2}{n + 2}} \p{\frac{\beta}{2} \norm{\nabla f\p{x_n}}^2 + \frac{1}{2 \beta \p{n + 1}} \norm{x_n - \bar x - \beta \nabla f\p{x_n}}^2} 
    }
    \\
    + \frac{1}{2\beta} \p{\prod_{k = 0}^n \p{1 - \frac{1 - \tols{k}^2}{k + 2}}}
    \norm{x_0 - \bar x}^2.
  \end{multline}
  On the other hand, using again \cref{eq:smooth_iteration},
  \begin{multline*}
    \frac{1}{2\beta \p{n + 2}} \norm{x_{n+1} - \bar x - \beta \nabla f\p{x_{n+1}}}^2 \\
    = \frac{1}{2\beta \p{n + 2}} \norm{x_n - \bar x - \beta \nabla
      f\p{x_n}}^2 + \frac{\beta}{2 \p{n + 2}} \norm{\Delta x_n + \nabla
      f\p{x_{n+1}}}^2 \\
    - \frac{1}{n + 2} \inpr{x_n - \bar x - \beta \nabla f\p{x_n}}{\Delta x_n + \nabla f\p{x_{n+1}}}.
  \end{multline*}
  Adding this to \cref{eq:smooth_proof_1}, followed by a simple calculation, yields
  \begin{multline*}
    f\p{x_{n+1}} - f\p{\bar x} + \frac{1}{2\beta \p{n + 2}} \norm{x_{n+1} - \bar
      x - \beta \nabla f\p{x_{n+1}}}^2 
    \\ \shoveleft{\quad
    \leq \frac{1}{2\beta} \p{\prod_{k = 0}^n \p{1 - \frac{1 - \tols{k}^2}{k + 2}}} \norm{x_0 - \bar x}^2 - \frac{\beta \p{n + 1}}{2 \p{n + 2}} \p{1 - \tols{n}^2} \norm{\nabla f\p{x_{n+1}}}^2
    }
    \\ \shoveleft{\qquad
    - \frac{\tols{n}^2}{2 \beta \p{n + 1} \p{n + 2}} \norm{x_n - \bar x - \beta \nabla f\p{x_n} + \frac{\beta \p{n + 1}}{\tols{n}^2} \Delta x_n + \beta \p{n + 1} \nabla f\p{x_{n+1}}}^2 
    }
    \\
    - \frac{\beta}{2} \p{1 - \frac{1 - \tols{n}^2}{n + 2}} \p{\norm{\nabla f\p{x_n}}^2 - \frac{1}{\tols{n}^2} \norm{\Delta x_n}^2}.
  \end{multline*}
  By \cref{eq:smooth_limsup} and taking into account the non-negativity of
  norms, we get
  \begin{multline*}
    f\p{x_{n+1}} - f\p{\bar x} + \frac{1}{2\beta \p{n + 2}} \norm{x_{n+1} - \bar
      x - \beta \nabla f\p{x_{n+1}}}^2 \\
    \leq \frac{1}{2\beta} \p{\prod_{k = 0}^n \p{1 - \frac{1 - \tols{k}^2}{k + 2}}} \norm{x_0 - \bar x}^2,
  \end{multline*}
  which finishes the induction. Let us now prove the convergence $f\p{x_n} \to
  f\p{\bar x}$ as $n \to +\infty$. Taking the logarithm in
  \cref{eq:smooth_variable_rate} gives
  \[
    \log \p{f\p{x_n} - f\p{\bar x}} \leq \sum_{k = 0}^{n - 1} \log \p{1 - \frac{1 - \tols{k}^2}{k + 2}} + \log \norm{x_0 - \bar x}^2 - \log \p{2\beta}.
  \]
  Since $\log$ is a concave function, $\log\p{1 - x} \leq -x$ for all $x \in
  \Real$. Hence,
  \begin{align*}
    \log \p{f\p{x_n} - f\p{\bar x}}
    &\leq - \sum_{k = 0}^{n - 1} \frac{1 - \tols{k}^2}{k + 2} + \log \norm{x_0 - \bar x}^2 - \log \p{2\beta} \\
    &\leq - \sum_{k = 0}^{n - 1} \frac{1 - \tols{k}}{k + 2} + \log \norm{x_0 - \bar x}^2 - \log \p{2\beta}.
  \end{align*}
  Because of the assumption in \cref{eq:smooth_nonsummability_condition}, the right-hand side converges to $-
  \infty$, so $f\p{x_n} \to f\p{\bar x}$ as $n \to \infty$.
\end{proof}

\subsection{Deviation-based schemes for non-smooth optimization}\label{sec:non-smooth}
The theory in \cref{sec:smooth} only applies to optimization problems with a smooth objective.
Theory for optimization with a non-smooth objective needs to be based on sub-differential calculus.
Our focus is on minimizing a non-smooth objective with an additional structure as in \cref{eq:smooth_non-smooth_problem}.

The proposed deviation-based iterative scheme is inspired by the classical forward-backward (or proximal-gradient/\ac{ISTA}) scheme \cite{Goldstein:1964}, but with deviations in the spirit of the gradient method outlined in~\cref{sec:smooth}. 
The next theorem states the scheme and analyses its convergence properties for solving the non-smooth minimization problem in \cref{eq:smooth_non-smooth_problem} where $g$ is not necessarily differentiable.
\begin{theorem}\label{thm:non-smooth_main}
  Consider minimizing $f+g$ where $f: \PrimS \to \Real$ is convex and differentiable with a gradient $\nabla f: \PrimS \to \PrimS$ that is Lipschitz continuous with constant $\beta^{-1}$ for some $\beta > 0$, and $g: \PrimS \to \RealExt$ is a proper, convex and lower semi-continuous function that is not necessarily differentiable.
  Next, consider the iterative scheme
  \begin{subequations}\label{eq:non-smooth_iteration}
  \begin{align}
  w_n &\defeq x_n + \Delta x_n^1, \label{eq:non-smooth_iteration_grad} \\
  x_{n+1} &\defeq \Prox{\gamma_n}{g}\p{x_n - \gamma_n \nabla f\p{w_n} + \frac{\gamma_n}{\beta} \Delta x_n^1 + \Delta x_n^2}. \label{eq:non-smooth_iteration_prox}
  \end{align}
  \end{subequations}
  where $x_0 \in \PrimS$ is some fixed initial point, $0 < \gamma_n < 2 \beta$ for all $n\geq 0$, and
  \begin{multline}\label{eq:non-smooth_increment_bounds}
    \frac{1}{2 \beta} \norm{\Delta x_n^1}^2 + \frac{\beta}{2 \gamma_n \p{2 \beta - \gamma_n}} \norm{\Delta x_n^2}^2 \\
    \leq \frac{\tolna{n} \p{2 \beta - \gamma_{n-1}}}{2 \beta \gamma_{n-1}} \norm{x_n - x_{n-1} - \frac{\beta}{2 \beta - \gamma_{n-1}} \Delta x_{n-1}^2}^2 \\
    + \frac{\beta \tolno{n}}{2} \norm{\nabla f\p{w_n} - \nabla f\p{w_{n-1}} - \frac{1}{\beta} \p{x_n - w_{n-1}}}^2
  \end{multline}
  holds for all $n\geq 1$. Then, the following holds:
  \begin{enumerate}
    \item \label{item:thm:non-smooth_Lyapunov}
    For $n \geq 0$, define  
    \[ V_n \defeq f\p{w_n} + g\p{x_{n+1}} + \inpr{\nabla f\p{w_n}}{x_{n+1} - w_n} + \dfrac{1}{2 \beta} \norm{x_{n+1} - w_n}^2.
    \]
    Then $V_n \geq f\p{x_{n+1}} + g\p{x_{n+1}}$ for all $n \geq 0$ and the sequence
      \begin{equation}\label{eq:VnSeq}
        \p{V_n + \frac{2 \beta - \gamma_n}{2 \beta \gamma_n} \norm{x_{n+1} - x_n - \frac{\beta}{2 \beta - \gamma_n} \Delta x_n^2}^2}_{n\geq 0}
      \end{equation}
      is monotonically non-increasing if $\tolna{n} \leq 1$ and $\tolno{n} \leq 1$ for all $n \geq 0$;
    \item \label{item:thm:non-smooth_summable} 
    If $f + g$ is bounded from below and
      \begin{align}
        0 < \liminf_{n\to\infty} \gamma_n \leq \limsup_{n\to\infty} \gamma_n < 2 \beta \label{eq:non-smooth_stepsize_limsup} \\
        0 \leq \limsup_{n\to \infty} \tolna{n} < 1 \qquad \text{and} \qquad 0 \leq \limsup_{n\to\infty} \tolno{n} < 1 \label{eq:non-smooth_radius_limsup}
      \end{align}
      holds, then 
      \begin{alignat*}{2}
        \sum_{n = 0}^\infty \norm{x_n - x_{n+1}}^2 &< +\infty, &\qquad\qquad
        \sum_{n = 0}^\infty \norm{\Delta x_n^1}^2 &< +\infty, \\
        &\text{and} & \sum_{n = 0}^\infty \norm{\Delta x_n^2}^2 &< + \infty. 
      \end{alignat*}

    \item \label{item:thm:non-smooth_cluster_ponts} 
    Under the conditions of \cref{item:thm:non-smooth_summable}, any weak sequential cluster point of $\p{x_n}_{n\geq 0}$ is a minimizer of $f + g$. If there are no such minimizers, then $\norm{x_n} \to +\infty$ as $n\to\infty$.
    Conversely, a weak sequential cluster point exists whenever the sequence $\p{x_n}_{n\geq 0}$ is bounded. 
  \end{enumerate}
\end{theorem}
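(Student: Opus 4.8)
My plan is to treat the three parts in order, with the Lyapunov monotonicity of \cref{item:thm:non-smooth_Lyapunov} as the analytic core from which \cref{item:thm:non-smooth_summable} and \cref{item:thm:non-smooth_cluster_ponts} follow by comparatively soft arguments. The inequality $V_n \geq f\p{x_{n+1}} + g\p{x_{n+1}}$ is immediate: applying the descent lemma \cref{eq:descent_lemma} with $x = w_n$ and $d = x_{n+1} - w_n$ bounds $f\p{x_{n+1}}$ by $f\p{w_n} + \inpr{\nabla f\p{w_n}}{x_{n+1} - w_n} + \frac{1}{2\beta}\norm{x_{n+1} - w_n}^2$, and adding $g\p{x_{n+1}}$ gives the claim. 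For the monotonicity I would form the difference of two consecutive terms of \cref{eq:VnSeq} and show it is nonpositive using three ingredients: (i) the fact that $x_{n+1} = \Prox{\gamma_n}{g}\p{u_n}$, where $u_n \defeq x_n - \gamma_n \nabla f\p{w_n} + \frac{\gamma_n}{\beta}\Delta x_n^1 + \Delta x_n^2$, which yields $\frac{1}{\gamma_n}\p{u_n - x_{n+1}} \in \partial g\p{x_{n+1}}$ and hence a lower bound for $g$ at the test point $z = x_n$, connecting $g\p{x_{n+1}}$ with $g\p{x_n}$; (ii) the convexity estimate \cref{eq:Lipschitz_estimation} for $f$ applied to the pair $w_{n-1}, w_n$, which produces the gradient-difference quantity $s_n \defeq \nabla f\p{w_n} - \nabla f\p{w_{n-1}} - \frac{1}{\beta}\p{x_n - w_{n-1}}$ appearing in the bound; and (iii) substitution of the scheme \cref{eq:non-smooth_iteration} to express every inner product through $x_{n+1} - x_n$, $\nabla f\p{w_n}$, $\Delta x_n^1$ and $\Delta x_n^2$.

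After these substitutions the decisive move is to complete squares so as to peel off exactly the correction term $\frac{2\beta - \gamma_n}{2\beta\gamma_n}\norm{q_n}^2$ with $q_n \defeq x_{n+1} - x_n - \frac{\beta}{2\beta - \gamma_n}\Delta x_n^2$, together with a negative multiple of $\norm{s_n}^2$, leaving as the \emph{only} positive residue the quantity $\frac{1}{2\beta}\norm{\Delta x_n^1}^2 + \frac{\beta}{2\gamma_n\p{2\beta - \gamma_n}}\norm{\Delta x_n^2}^2$. At that point \cref{eq:non-smooth_increment_bounds} is precisely the statement that this residue is dominated by $\tolna{n}$ times the index-$\p{n-1}$ correction term plus $\tolno{n}$ times the $s_n$-term, so that with $\tolna{n}, \tolno{n} \leq 1$ the consecutive difference in \cref{eq:VnSeq} is nonpositive. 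I expect this square-completion step to be the main obstacle, since the coefficients in \cref{eq:non-smooth_iteration} and in \cref{eq:non-smooth_increment_bounds} are reverse-engineered to make exactly these squares close, and any bookkeeping error there collapses the argument.

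For \cref{item:thm:non-smooth_summable} I would note that $V_n \geq f\p{x_{n+1}} + g\p{x_{n+1}} \geq \inf_{x \in \PrimS}\p{f + g}\p{x} > -\infty$ while the correction term in \cref{eq:VnSeq} is nonnegative, so that sequence is bounded below; being non-increasing it converges, and the telescoped sum of the per-step decrements is finite. From the computation above each decrement is at least $\p{1 - \tolna{n}}\frac{2\beta - \gamma_{n-1}}{2\beta\gamma_{n-1}}\norm{q_{n-1}}^2 + \p{1 - \tolno{n}}\frac{\beta}{2}\norm{s_n}^2$, and under \cref{eq:non-smooth_stepsize_limsup} and \cref{eq:non-smooth_radius_limsup} these coefficients are bounded below by a positive constant, giving $\sum_n \norm{q_n}^2 < +\infty$ and $\sum_n \norm{s_n}^2 < +\infty$. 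Feeding this back, the right-hand side of \cref{eq:non-smooth_increment_bounds} is summable, hence so is its left-hand side; since $\gamma_n$ stays away from $0$ and $2\beta$, this yields $\sum_n \norm{\Delta x_n^1}^2 < +\infty$ and $\sum_n \norm{\Delta x_n^2}^2 < +\infty$. Finally the triangle inequality applied to $q_n$, with $\frac{\beta}{2\beta - \gamma_n}$ bounded, converts $\sum_n \norm{q_n}^2$ and $\sum_n \norm{\Delta x_n^2}^2$ being finite into $\sum_n \norm{x_n - x_{n+1}}^2 < +\infty$.

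For \cref{item:thm:non-smooth_cluster_ponts}, summability forces $\norm{x_{n+1} - x_n} \to 0$, $\norm{\Delta x_n^1} \to 0$ and $\norm{\Delta x_n^2} \to 0$, so $w_n - x_n \to 0$ and $w_n - x_{n+1} \to 0$. Rewriting the prox inclusion gives an explicit residual $r_n \defeq \nabla f\p{x_{n+1}} - \nabla f\p{w_n} + \frac{1}{\gamma_n}\p{x_n - x_{n+1}} + \frac{1}{\beta}\Delta x_n^1 + \frac{1}{\gamma_n}\Delta x_n^2 \in \partial\p{f + g}\p{x_{n+1}}$, and the Lipschitz continuity of $\nabla f$ together with $\gamma_n$ bounded below shows $r_n \to 0$ strongly. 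If $x_{n_k} \weakto x$ along a subsequence, weak-strong sequential closedness of the maximally monotone graph of $\partial\p{f + g}$ then gives $0 \in \partial\p{f + g}\p{x}$, i.e.\ $x$ minimizes $f + g$. The remaining assertions are standard: the absence of a minimizing weak cluster point means there is no weak cluster point at all, and since bounded sequences in a Hilbert space admit weakly convergent subsequences, $\p{x_n}_{n \geq 0}$ must then be unbounded, that is $\norm{x_n} \to +\infty$; conversely boundedness guarantees a weak cluster point.
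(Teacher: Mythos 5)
Your proposal follows essentially the same route as the paper's proof: the same Lyapunov function with the same correction term, the same three ingredients (descent lemma for $V_n \geq f\p{x_{n+1}} + g\p{x_{n+1}}$, the prox inclusion tested at $x_n$, and \cref{eq:Lipschitz_estimation} at the pair $w_{n-1}, w_n$), the same square completion producing exactly the quantities $s_n$ and $q_n$ with the residue $\frac{1}{2\beta}\norm{\Delta x_n^1}^2 + \frac{\beta}{2\gamma_n\p{2\beta-\gamma_n}}\norm{\Delta x_n^2}^2$, the same telescoping argument with uniform bounds from the limsup conditions for \cref{item:thm:non-smooth_summable}, and the same vanishing-residual plus weak--strong closedness of the maximally monotone graph for \cref{item:thm:non-smooth_cluster_ponts} (your $\partial\p{f+g}$ versus the paper's $\nabla f + \partial g$ is immaterial since $f$ is everywhere differentiable). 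The only step you leave unexecuted is the square-completion bookkeeping, which you correctly identify as the crux and whose outcome you state in the exact form the paper obtains.
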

\begin{corollary}\label{cor:non-smooth}
If, in addition to the assumptions in\cref{item:thm:non-smooth_cluster_ponts} of~\cref{thm:non-smooth_main}, the objective function $f + g$ is coercive, i.e., $f\p{x} + g\p{x} \to +\infty$ as $\norm{x} \to +\infty$, and strictly convex, then the sequence $\p{x_n}_{n\geq 0}$ converges weakly to the unique minimizer of $f + g$.
\end{corollary}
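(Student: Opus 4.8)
The plan is to mirror the proof of \cref{cor:smooth}, replacing the monotonicity of $\p{f\p{x_n}}_{n \geq 0}$ used there by the Lyapunov structure established in \cref{item:thm:non-smooth_Lyapunov} of \cref{thm:non-smooth_main}. The three ingredients are: boundedness of $\p{x_n}_{n \geq 0}$, identification of every weak sequential cluster point as the unique minimizer, and the standard fact that a bounded sequence with a single weak sequential cluster point converges weakly to it.

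First I would establish boundedness. By \cref{item:thm:non-smooth_Lyapunov}, the augmented sequence in \cref{eq:VnSeq} is monotonically non-increasing, hence bounded above by its value at $n = 0$; since its second summand is a squared norm and therefore non-negative, this forces $V_n$ itself to be bounded above by a constant $C$ independent of $n$. Combining this with the inequality $V_n \geq f\p{x_{n+1}} + g\p{x_{n+1}}$ from the same item yields $f\p{x_{n+1}} + g\p{x_{n+1}} \leq C$ for all $n \geq 0$. Coercivity of $f + g$ then gives that $\p{x_n}_{n \geq 0}$ is bounded.

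Next, by the boundedness just shown together with \cref{item:thm:non-smooth_cluster_ponts}, a weak sequential cluster point of $\p{x_n}_{n \geq 0}$ exists, and every such point is a minimizer of $f + g$. Strict convexity of $f + g$ guarantees that this minimizer is unique by \cite[Corollary~11.9]{BauschkeCombettes:2017}; call it $\bar x$. Consequently $\bar x$ is the only weak sequential cluster point of the sequence. Finally, a bounded sequence in a Hilbert space possessing exactly one weak sequential cluster point converges weakly to that point, so \cite[Lemma~2.46]{BauschkeCombettes:2017} yields $x_n \weakto \bar x$, which completes the argument.

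The one step requiring genuine care is the boundedness argument: unlike the smooth case, where the objective values are directly non-increasing, here one must extract a uniform upper bound on $f\p{x_{n+1}} + g\p{x_{n+1}}$ by exploiting both the monotonicity of the full Lyapunov quantity in \cref{eq:VnSeq} and the non-negativity of its correction term, since the objective itself need not decrease monotonically. Once this bound is in place, the remaining steps are verbatim transcriptions of the reasoning used for \cref{cor:smooth}.
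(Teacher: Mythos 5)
Your proposal is correct and follows essentially the same route as the paper's own proof: boundedness of $\p{x_n}_{n\geq 0}$ from the monotonicity of the Lyapunov sequence in \cref{eq:VnSeq} (using non-negativity of the correction term and $V_n \geq f\p{x_{n+1}} + g\p{x_{n+1}}$) together with coercivity, followed by \cref{item:thm:non-smooth_cluster_ponts} of \cref{thm:non-smooth_main}, uniqueness of the minimizer via strict convexity, and \cite[Lemma~2.46]{BauschkeCombettes:2017}. Your write-up is in fact slightly more explicit than the paper's about why $V_n$ is bounded above, but the argument is the same.
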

\begin{proof}
The sequence $\p{x_n}_{n \geq 0}$ is bounded: by~\cref{item:thm:non-smooth_Lyapunov} of~\cref{thm:non-smooth_main}, the non-increasing real sequence in~\eqref{eq:non-smooth_lyapunov_monotonicity} guarantees that $\p{V_n}_{n\geq 0}$ is bounded from above and greater or equal to $\p{f\p{x_{n+1}} + g\p{x_{n+1}}}_{n\geq 0}$. The asserted boundedness then follows from the coercivity of $f + g$.

The statement of the corollary then follows from \cite[Lemma~2.46]{BauschkeCombettes:2017}, taking into account~\cref{item:thm:non-smooth_cluster_ponts} of~\cref{thm:non-smooth_main} and  that the strict convexity of $f + g$ ensures the uniqueness of the minimizer of $f + g$.
\end{proof}
\begin{proof}[Proof of~\cref{thm:non-smooth_main}]
  We start by proving the claims in \cref{item:thm:non-smooth_Lyapunov}.
  The first inequality follows from \cref{eq:descent_lemma}, so we focus on proving the monotonic decrease of the sequence in \cref{eq:VnSeq}.
  Note first that, for $n \geq 0$,  \cref{eq:non-smooth_iteration_prox} is equivalent to
  \begin{equation}\label{eq:non-smooth_inclusion}
    \frac{x_n - x_{n+1}}{\gamma_n} - \nabla f\p{w_n} + \frac{1}{\beta} \Delta x_n^1 + \frac{1}{\gamma_n} \Delta x_n^2 \in \partial g\p{x_{n+1}}.
  \end{equation}
  This yields
  \begin{multline}
    g\p{x_n} \geq g\p{x_{n+1}} + \frac{1}{\gamma_n} \norm{x_n - x_{n+1}}^2 - \inpr{\nabla f\p{w_n}}{x_n - x_{n+1}} \\
    + \frac{1}{\beta} \inpr{\Delta x_n^1}{x_n - x_{n+1}} + \frac{1}{\gamma_n} \inpr{\Delta x_n^2}{x_n - x_{n+1}}. \label{eq:non-smooth_g}
  \end{multline}
  By \cref{eq:Lipschitz_estimation} we get the following inequality for all $n \geq 1$:
  \begin{equation}
    f\p{w_{n-1}} \geq f\p{w_n} + \inpr{\nabla f\p{w_n}}{w_{n-1} - w_n} + \frac{\beta}{2} \norm{\nabla f\p{w_n} - \nabla f\p{w_{n-1}}}^2. \label{eq:non-smooth_f}
  \end{equation}
  From \cref{eq:non-smooth_f,eq:non-smooth_g} and the definition of $V_n$, we therefore obtain
  \begin{align}
    V_{n-1} - V_n
    &= f\p{w_{n-1}} - f\p{w_n} + g\p{x_n} - g\p{x_{n+1}} + \inpr{\nabla f\p{w_{n-1}}}{x_n - w_{n-1}} \nonumber \\
    &\qquad + \frac{1}{2 \beta} \norm{x_n - w_{n-1}}^2 - \inpr{\nabla f\p{w_n}}{x_{n+1} - w_n} - \frac{1}{2 \beta} \norm{x_{n+1} - w_n}^2 \nonumber \\
    &\geq \inpr{\nabla f\p{w_n} - \nabla f\p{w_{n-1}}}{w_{n-1} - x_n} + \frac{\beta}{2} \norm{\nabla f\p{w_n} - \nabla f\p{w_{n-1}}}^2 \nonumber \\
    &\qquad + \frac{1}{2 \beta} \norm{x_n - w_{n-1}}^2 + \frac{1}{\gamma_n} \norm{x_n - x_{n+1}}^2 - \frac{1}{2 \beta} \norm{x_{n+1} - w_n}^2 \nonumber \\
    &\qquad + \frac{1}{\beta} \inpr{\Delta x_n^1}{x_n - x_{n+1}} + \frac{1}{\gamma_n} \inpr{\Delta x_n^2}{x_n - x_{n+1}} \nonumber \\
    &= \frac{\beta}{2} \norm{\nabla f\p{w_n} - \nabla f\p{w_{n-1}} - \frac{1}{\beta} \p{x_n - w_{n-1}}}^2 \nonumber \\
    &\qquad + \frac{2 \beta - \gamma_n}{2 \beta \gamma_n} \norm{x_{n+1} - x_n - \frac{\beta}{2 \beta - \gamma_n} \Delta x_n^2}^2 \nonumber \\
    &\qquad 
    - \frac{1}{2 \beta} \norm{\Delta x_n^1}^2 - \frac{\beta}{2 \gamma_n \p{2 \beta - \gamma_n}} \norm{\Delta x_n^2}^2. \label{eq:non-smooth_lyapunov_difference}
  \end{align}
  Using \cref{eq:non-smooth_increment_bounds}, we then obtain the following inequality for all $n\geq 1$:
  \begin{multline}\label{eq:non-smooth_lyapunov_monotonicity}
    V_{n-1} + \frac{2 \beta - \gamma_{n-1}}{2 \beta \gamma_{n-1}} \norm{x_n - x_{n-1} - \frac{\beta}{2 \beta - \gamma_{n-1}} \Delta x_{n-1}^2}^2 
    \\ \shoveleft{\qquad
    \geq V_n + \frac{2 \beta - \gamma_n}{2 \beta \gamma_n} \norm{x_{n+1} - x_n - \frac{\beta}{2 \beta - \gamma_n} \Delta x_n^2}^2 
    }
    \\
    + \frac{\beta \p{1 - \tolno{n}}}{2} \norm{\nabla f\p{w_n} - \nabla f\p{w_{n-1}} - \frac{1}{\beta} \p{x_n - w_{n-1}}}^2 \\
    + \frac{\p{1 - \tolna{n}} \p{2 \beta - \gamma_{n-1}}}{2 \beta \gamma_{n-1}} \norm{x_n - x_{n-1} - \frac{\beta}{2 \beta - \gamma_{n-1}} \Delta x_{n-1}^2}^2.
  \end{multline}
  The claim that the sequence in \cref{eq:VnSeq} is monotonically non-increasing now follows from ignoring the two last terms in \cref{eq:non-smooth_lyapunov_monotonicity}, which are non-negative.
  This concludes the proof of the claims in \cref{item:thm:non-smooth_Lyapunov}.
  
  We next turn our attention to proving the claim in \cref{item:thm:non-smooth_summable}.
  First, let \(M \in \Real\) such that $f\p{x} + g\p{x} \geq M > -\infty$ holds for all $x\in\PrimS$ as $f + g$ is (by assumption) bounded from below.
  For fixed $N \geq 1$, consider the sum over $n = 1, \ldots, N$ for the terms in  \cref{eq:non-smooth_lyapunov_monotonicity}:
  \begin{multline*}
    V_0 + \frac{2 \beta - \gamma_0}{2 \beta \gamma_0} \norm{x_1 - x_0 - \frac{\beta}{2 \beta - \gamma_0} \Delta x_0^2}^2 
    \\ \shoveleft{\qquad
    \geq V_N + \frac{2 \beta - \gamma_N}{2 \beta \gamma_N} \norm{x_{N+1} - x_N - \frac{\beta}{2 \beta - \gamma_N} \Delta x_N^2}^2
    } 
    \\ \shoveleft{\qquad\qquad
    + \sum_{n = 1}^N \frac{\beta \p{1 - \tolno{n}}}{2} \norm{\nabla f\p{w_n} - \nabla f\p{w_{n-1}} - \frac{1}{\beta} \p{x_n - w_{n-1}}}^2 
    }
    \\ 
    + \sum_{n = 0}^{N-1} \frac{\p{1 - \tolna{n+1}} \p{2 \beta - \gamma_n}}{2 \beta \gamma_n} \norm{x_{n+1} - x_n - \frac{\beta}{2 \beta - \gamma_n} \Delta x_n^2}^2.
  \end{multline*}
  By the claim in \cref{item:thm:non-smooth_Lyapunov}, we know $V_N \geq f\p{x_{N+1}} + g\p{x_{N+1}} \geq M$, so 
  \begin{multline*}
    \sum_{n = 1}^N \frac{\beta \p{1 - \tolno{n}}}{2} \norm{\nabla f\p{w_n} - \nabla f\p{w_{n-1}} - \frac{1}{\beta} \p{x_n - w_{n-1}}}^2 \\
    + \sum_{n = 0}^{N-1} \frac{\p{1 - \tolna{n+1}} \p{2 \beta - \gamma_n}}{2 \beta \gamma_n} \norm{x_{n+1} - x_n - \frac{\beta}{2 \beta - \gamma_n} \Delta x_n^2}^2 \\
    \leq V_0 + \frac{2 \beta - \gamma_0}{2 \beta \gamma_0} \norm{x_1 - x_0 - \frac{\beta}{2 \beta - \gamma_0} \Delta x_0^2}^2 - M.
  \end{multline*}
  Next, by \cref{eq:non-smooth_stepsize_limsup,eq:non-smooth_radius_limsup}, there exist some $n_0 \geq 0$, $0 < \gamma_- \leq \gamma_+ < 2 \beta$, $0 \leq \tolna{+} < 1$ and $0 \leq \tolno{+} < 1$ such that
  \[
    \gamma_- \leq \gamma_n \leq \gamma_+, \qquad 0 \leq \tolna{n} \leq \tolna{+} \quad \text{and} \quad 0 \leq \tolno{n} \leq \tolno{+}
    \quad\text{for all $n\geq n_0$.}
  \]
  We therefore get
  \begin{multline*}
    \sum_{n = n_0}^N \norm{\nabla f\p{w_n} - \nabla f\p{w_{n-1}} - \frac{1}{\beta} \p{x_n - w_{n-1}}}^2 \\
    \leq \frac{2}{\beta \p{1 - \tolno{+}}} \p{V_0 + \frac{2 \beta - \gamma_0}{2 \beta \gamma_0} \norm{x_1 - x_0 - \frac{\beta}{2 \beta - \gamma_0} \Delta x_0^2}^2 - M}
  \end{multline*}
  and
  \begin{multline*}
    \sum_{n = n_0}^{N-1} \norm{x_{n+1} - x_n - \frac{\beta}{2 \beta - \gamma_n} \Delta x_n^2}^2 \\
    \leq \frac{2 \beta \gamma_+}{\p{1 - \tolna{+}} \p{2 \beta - \gamma_+}} \p{V_0 + \frac{2 \beta - \gamma_0}{2 \beta \gamma_0} \norm{x_1 - x_0 - \frac{\beta}{2 \beta - \gamma_0} \Delta x_0^2}^2 - M}.
  \end{multline*}
  Since the right-hand side is finite and independent of $N$ in both cases, we get
  \begin{align*}
    \sum_{n = n_0}^\infty \norm{\nabla f\p{w_n} - \nabla f\p{w_{n-1}} - \frac{1}{\beta} \p{x_n - w_{n-1}}}^2 &< +\infty, \\
    \sum_{n = n_0}^\infty \norm{x_{n+1} - x_n - \frac{\beta}{2 \beta - \gamma_n} \Delta x_n^2}^2 &< + \infty.
  \end{align*}
  Using \cref{eq:non-smooth_increment_bounds} and the fact that the convergence of the series does not depend on the (finite) number of elements with indices below $n_0$, we can immediately conclude that 
  \[
    \sum_{n = 0}^\infty \norm{\Delta x_n^1}^2 < +\infty \qquad \text{and} \qquad \sum_{n = 0}^\infty \norm{\Delta x_n^2}^2 < + \infty.
  \]
  The last assertion of \cref{item:thm:non-smooth_summable} follows from the observation below, which also concludes the proof of the claims in \cref{item:thm:non-smooth_summable}.
  \begin{multline*}
    \sum_{n = 0}^\infty \norm{x_{n+1} - x_n}^2 \\
    \leq 2\sum_{n = 0}^\infty \p{\norm{x_{n+1} - x_n - \frac{\beta}{2 \beta - \gamma_n} \Delta x_n^2}^2 + \frac{\beta^2}{\p{2 \beta - \gamma_n}^2} \norm{\Delta x_n^2}^2} < \infty.
  \end{multline*}

  We finally prove the claim in \cref{item:thm:non-smooth_cluster_ponts}.
  Let $x$ be a weak sequential cluster point of the sequence $\p{x_n}_{n \geq 0}$ with a sub-sequence $\p{n_k}_{k \geq 0}$ such that $x_{n_k} \weakto x$ as $k \to \infty$. By \cref{eq:non-smooth_inclusion}, we have
  \begin{equation}\label{eq:non-smooth_sub-sequence_inclusion}
    \frac{x_{n_k} - x_{n_k+1}}{\gamma_{n_k}} + \nabla f\p{x_{n_k+1}} - \nabla f\p{w_{n_k}} + \frac{\Delta x_{n_k}^1}{\beta} + \frac{\Delta x_{n_k}^2}{\gamma_{n_k}} \in \p{\nabla f + \partial g}\p{x_{n_k+1}}
  \end{equation}
  for all $k \geq 0$. The assertion in \cref{item:thm:non-smooth_summable} combined with the Lipschitz continuity of $\nabla f$ yields the following as $k \to \infty$:
  \begin{align*}
    0 &\leq \norm{\frac{x_{n_k} - x_{n_k + 1}}{\gamma_{n_k}}} \leq \frac{\norm{x_{n_k} - x_{n_k + 1}}}{\gamma_-} \to 0, \\
  0 &\leq \norm{\nabla f\p{x_{n_k + 1}} - \nabla f\p{w_{n_k}}} \leq \frac{1}{\beta} \p{\norm{x_{n_k + 1} - x_{n_k}} + \norm{\Delta x_{n_k}^1}} \to 0, \\
  0 &\leq \norm{\frac{\Delta x_{n_k}^2}{\gamma_{n_k}}} \leq \frac{1}{\gamma_-} \norm{\Delta x_{n_k}}^2 \to 0.
  \end{align*}
  Hence, the left-hand side of \cref{eq:non-smooth_sub-sequence_inclusion} converges to $0$ as $k \to \infty$. 
  Thus, by \cite[Proposition 20.38]{BauschkeCombettes:2017} we get $x_{n_k + 1} = \p{x_{n_k + 1} - x_{n_k}} + x_{n_k} \weakto x$, $0 \in \p{\nabla f + \partial g}\p{x}$, i.e.\@ $x$ is a minimizer of $f + g$. 
  The last statement follows from the fact that if $\norm{x_n} \not\to +\infty$, then there exists a bounded sub-sequence that has a weakly convergent sub-sequence whose weak limit is a minimizer of $f + g$.
  This concludes the proof of the claim in \cref{item:thm:non-smooth_cluster_ponts}.
\end{proof}

\begin{remark}
  if $g = 0$ on $\PrimS$, then \cref{eq:non-smooth_iteration} reads as 
  \[
  w_{n+1} = w_n - \gamma_n \nabla f\p{w_n} + \Delta x_{n+1}^1 + \frac{\gamma_n - \beta}{\beta} \Delta x_n^1 + \Delta x_n^2.
  \]
  This is equivalent with the algorithm in \cref{sec:smooth}, provided that $\gamma_n = \beta$ for all $n\geq 0$ and
  \begin{equation}\label{eq:SmoothAlgoIneq}
    \norm{\Delta x_n^2 + \Delta x_{n+1}^1} \leq \beta \norm{\nabla f\p{w_n}}.
  \end{equation}
  On the other hand, when $g = 0$ the condition in \cref{eq:non-smooth_increment_bounds} becomes 
  \begin{multline*}
    \frac{1}{2 \beta} \norm{\Delta x_n^1}^2 + \frac{\beta}{2 \gamma_n \p{2 \beta - \gamma_n}} \norm{\Delta x_n^2}^2 \\
    \leq \frac{\tolna{n} \p{2 \beta - \gamma_{n-1}}}{2 \beta \gamma_{n-1}} \norm{\gamma_{n-1} \nabla f\p{w_{n-1}} - \frac{\gamma_{n-1}}{\beta} \Delta x_{n-1}^1 - \frac{\beta - \gamma_{n-1}}{2 \beta - \gamma_{n-1}} \Delta x_{n-1}^2}^2 \\
    + \frac{\beta \tolno{n}}{2} \norm{\nabla f\p{w_n} - \frac{\beta - \gamma_{n-1}}{\beta} \nabla f\p{w_{n-1}} - \frac{\gamma_{n-1} - \beta}{\beta^2} \Delta x_{n-1}^1 - \frac{1}{\beta} \Delta x_{n-1}^2}^2.
  \end{multline*}
  In the case that $\gamma_n = \beta$ for all $n\geq 0$, this simplifies to the following inequality that is different from the one in \cref{eq:SmoothAlgoIneq}:
  \begin{multline*}
    \norm{\Delta x_n^1}^2 + \norm{\Delta x_n^2}^2 \\
    \leq \tolna{n} \norm{\beta \nabla f\p{w_{n-1}} - \Delta x_{n-1}^1}^2 + \tolno{n} \norm{\beta \nabla f\p{w_n} - \Delta x_{n-1}^2}^2.
  \end{multline*}
\end{remark}
\begin{remark}
    Setting $\Delta x_n^1 = \Delta x_n^2 = 0$ recovers the classical forward-backward algorithm \cite{Goldstein:1964}.
    Likewise, the \ac{FISTA} algorithm \cite{BeckTeboulle:2009} can be obtained by setting 
    \[ \Delta x_n^1 \defeq \frac{t_n-1}{t_{n+1}} (x_n-x_{n-1}) 
    \quad\text{and}\quad 
    \Delta x_n^2 \defeq \frac{\beta -\gamma_n}{\beta}\Delta x_n^1
    \quad\text{for all $n \geq 0$‚}
    \]
    with $( t_n )_{n=0}^{\infty}$ defined recursively as $t_{n+1} \defeq \frac{1}{2}(1 + \sqrt{1 + 4t_{n}^2})$ for $n \geq 1$ and $t_0\defeq1$.
\end{remark}

\section{Deep-learning accelerated optimization}\label{sec:methods}
\Cref{sec:smooth,sec:non-smooth} introduced deviation-based iterative schemes for solving smooth and non-smooth minimization problems that contain many traditional iterative schemes as special cases.
These schemes rely on a rule for selecting deviations as iterates progress.
Selection rules that give rise to convergent sequences will be referred to as \emph{convergent} and \cref{thm:smooth_main_convergence,thm:non-smooth_main} provide criteria that imply convergence. 

A natural next step is to look for a selection rule that has \enquote{optimal} convergence. 
This requires one to formalize the notion of \enquote{optimal}.
Such a selection rule will most likely be problem dependent, i.e.\@ we cannot expect to have a single selection rule that is optimal for all minimization problems.
One approach is to search for the optimization method that is optimal \emph{on average} when applied to a specific class of minimization problems.

Based on the above, we adopt principles from statistical decision theory to learn an optimal convergent selection rule from suitable example data.
It is clearly unfeasible to search over all possible convergent selection rules, so attention is restricted to those that are encoded by a (deep) neural network.
A key issue is to select a neural network architecture that yields a convergent selection rule.
The following sections provide details of this approach.

\subsection{Optimization parametrized by an additional input}\label{ssec:ParamOpt}
The first task is to specify the class of minimization problems. 
Here we simply consider minimizing an objective function on $\PrimS$ that is parametrized by an additional input $y \in \OptParamSp$, i.e.
\begin{equation}\label{eq:FminParam0}
    \min_{x \in \PrimS} F_y\p{x} \quad\text{for $y \in \OptParamSp$.}
\end{equation}
$F_y \colon \PrimS \to \RealExt$ above is a proper, convex and lower semi-continuous function and $\OptParamSp$ is the set containing the additional input for the objective, i.e., the input data. For simplicity, we will assume that $\PrimS$ is finite-dimensional and that each $F_y$ is strictly convex and coercive so that~\cref{cor:non-smooth,cor:smooth} are applicable and provide conditions for strong convergence of the respective iteration sequences.
Such problems naturally arise in many applications, e.g., when using variational regularization to solve ill-posed inverse problems (see \cref{sec:applications}).

Next is to formalize the notion of \emph{optimal convergence} for a optimization scheme that aims to solve problems of the form in \cref{eq:FminParam0}, which in turn dictates how to train the neural network.
Computational feasibility was earlier stated as our main concern, so we consider a setting where the computational budget is limited in terms of a fixed number of iterations $N$, other possible choices are discussed in \cref{rem:discussion}.
Consider an iterative scheme with parameters $\theta \in \Theta$ applied for $N \in \mathbb{N}$ iterations when the initial point $x_0 \in \PrimS$ is fixed, this defines a function of the input $y \in \OptParamSp$ for the objective:
\[
\OptSolver_{N, \theta} \colon \OptParamSp \to \PrimS
\]

Next, the aforementioned optimality means we look for the optimization method, which on average (as $y \in \OptParamSp$ varies) renders an iterate after $N$ steps that is as close as possible to the true minimal value of the objective in \cref{eq:FminParam0}.
Stated mathematically, let $Y$ denote a $\OptParamSp$-valued random variable generating $y \in \OptParamSp$. 
For optimality we look for $\OptSolver_{N,\hat{\theta}}(\,\cdot\,) \colon \OptParamSp \to \PrimS$ where 
\begin{equation}\label{eq:TrainingStat}
\hat{\theta} \in \argmin_{\theta \in \Theta} 
  \mathbb{E}_Y \Bigl[ 
    F_Y\bigl(\OptSolver_{N,\theta}(Y)\bigr)
  \Bigr].
\end{equation}
In practice, the distribution of $Y$ is not available, instead one has  samples $\{ y_i \}_{i=1}^m \subset \OptParamSp$ generated by $Y$.
We therefore consider its empirical counterpart:
\begin{equation}\label{eq:TrainingEmp}
\hat{\theta} \in \argmin_{\theta \in \Theta} 
  \frac{1}{m}\sum_{i=1}^m F_{y_i}\bigl(\OptSolver_{N,\theta}(y_i)\bigr).
\end{equation}

The unsupervised learning problem\footnote{This is unsupervised learning since training data only consists of $y_i$'s and not the $x^*_i(y)$'s that minimize $x \mapsto F_{y_i}(x)$.} \cref{eq:TrainingEmp} results in a scheme that provides an output which is on average closest to minimizing the objective in \cref{eq:FminParam0} after $N$ iterations.
Combining this with convergence means ensuring the following holds for all $y\in \OptParamSp$:
\begin{equation}\label{eq:Convergence}
  \lim_{n \to \infty} \OptSolver_{n,\hat{\theta}}(y)
  = x^*(y) 
\end{equation}
where $x^*(y) \in \PrimS$ solves \cref{eq:FminParam0}. Hence, a solver given by $\hat{\theta}$ that is trained to be optimal for $N$ iterates will converge to a solution of \cref{eq:FminParam0} if we run more iterations, even when it is not trained for this scenario.

The next two sections provide further details on how to define a deviation-based iterative solver $\OptSolver_{N,\theta} \colon \OptParamSp \to \PrimS$ in terms of a parametrized selection rule $\Psi_{\theta}$ for $\theta \in \Theta$.
The optimal scheme is obtained by training as in \cref{eq:TrainingEmp} and convergence is ensured by an appropriate choice of neural network architecture. 

\begin{remark}\label{rem:discussion}

The topic of how to define \enquote{optimal convergence} is of high importance to optimization, but the choice becomes especially apparent in learning-based schemes. In the classical optimization literature \cite{GonzagaKarasRossetto:2013,Neumaier:2016,AhookhoshNeumaier:2017a,AhookhoshNeumaier:2017b} the most widely used definition is upper bounds on the worst-case performance as the number of iterations go to infinity. For learning this is not an attractive definition since it is hard to evaluate computationally. In this work we chose to optimize the average function value after a fixed number of steps, but it would also be possible to optimise e.g. the average number of steps needed for convergence.

\end{remark}

\subsection{Smooth convex objective}\label{ssec:smooth_method}
Assume that $F_y \colon \PrimS \to \Real$ in \cref{eq:FminParam0} is differentiable with a $\beta^{-1}$-Lipschitz continuous gradient.
Next, consider iterates $\p{x_n\p{y}}_{n\geq 0} \subset \PrimS$ generated by an iterative scheme for solving \cref{eq:FminParam0}.
The mapping $\OptSolver_{N,\theta} \colon \OptParamSp \to \PrimS$ is defined as the $N$-th term in that sequence, i.e.\@ $\OptSolver_{N,\theta}(y) = x_N(y)$.

A natural approach is to consider the deviation-based scheme in \cref{eq:smooth_iteration}, which, when applied to the $\OptParamSp$-parametrized minimization in \cref{eq:FminParam0}, reads as 
\[
  x_{n+1}(y) \defeq x_n(y) - \beta\Bigl( \nabla F_y\bigl(x_n(y)\bigr) + \Delta x_n(y)\Bigr)
  \quad\text{for all $n\geq 0$.}
\]
The selection rule for setting the deviations $\Delta x_n(y)$ above is defined as
\begin{equation}\label{eq:DNNdeviation}
  \Delta x_n(y) \defeq \Psi_{\theta}\Bigl( x_n(y), \nabla F_y\bigl(x_n(y)\bigr), \Delta x_{n-1}(y)\Bigr).
\end{equation}
where $\Psi_{\theta} \colon \PrimS \times \PrimS \times \PrimS \to \PrimS$ is an updating function that ensures that the resulting deviations satisfy the conditions in \cref{eq:smooth_limsup}.
The above completes the definition of $\OptSolver_{N,x_0} \colon \OptParamSp \times \Theta \to \PrimS$ and the optimal solver can be obtained from example data by training as in \cref{eq:TrainingEmp}.

The updating function $\Psi_{\theta}$ will be given by a (deep) neural network that is parametrized by $\theta \in \Theta$.
We construct the neural network $\Psi_{\theta}$ by composing a conventional neural network $\Tilde{\Psi}_{\theta} \colon \PrimS \times \PrimS \times \PrimS \to \PrimS$ with a normalizing function $\normalize \colon \PrimS \times \PrimS \to \PrimS$, which ensures that the outcome is a feasible deviation. 
More precisely, we define 
\begin{equation*}
  \Psi_{\theta}\Bigl( x_n(y), \nabla F_y\bigl(x_n(y)\bigr), \Delta x_{n-1}(y)\Bigr)
\defeq
    \normalize\Bigl(h_n,\nabla F_y\bigl(x_n(y)\bigr)\Bigr)
\end{equation*}
where
\begin{align}
  & \normalize\Bigl(h_n,\nabla F_y\bigl(x_n(y)\bigr)\Bigr) 
  \defeq \tols{}  \frac{h_n}{\sqrt{\norm{h_n}^2 + 1}}  
  \Bigl\| \nabla F_y\bigl(x_n(y)\bigr) \Bigr\|,
  \label{eq:smooth_normalization}
\\[0.75em] 
   & h_n \defeq \Tilde{\Psi}_{\theta}\Bigl(x_n(y), \nabla F_y\bigl(x_n(y)\bigr), \Delta x_{n-1}(y)\Bigr).
\label{eq:standard_nn}
\end{align}

Here $\tols{} \in [0, 1)$ controls the size of the learned deviations within the feasible region. 
\Cref{sec:Results} presents performance for different values of $\tols{}$ in ablation experiments.
Regarding the choice of $\normalize$, the essential property is that its range is contained in the ball that guarantees that the iterative scheme is convergent as formalized in \cref{eq:smooth_limsup} with $\tols{n} = \tols{}$. 
Thus, one can essentially replace $\normalize$ with any transformation with this property.

\begin{remark}
In \cref{eq:DNNdeviation} we use a current iterate, gradient at that iterate, and previous deviation as input for the network $\Psi_{\theta}$ that defines the selection rule for the deviations.
This choice is largely heuristic and often (as in the problem discussed in \cref{ssec:smooth_practical}), the objective function naturally consists of two parts $F_y = f_y + g_y$.
Here one has the option to evaluate gradients of both parts separately instead of taking the gradient of the objective.
This corresponds to a selection rule of the form 
\begin{equation}\label{eq:smooth_updates}
\Delta x_n(y) \defeq \Psi_{\theta}\Bigl(
  x_{n}(y), 
  \nabla f_y\bigl(x_{n}(y)\bigr), 
  \nabla g_y\bigl(x_{n}(y)\bigr), 
  \Delta x_{n-1}(y)
  \Bigr)
\end{equation}
for some neural network $\Psi_{\theta} \colon \PrimS \times \PrimS \times \PrimS \times \PrimS \to \PrimS$.
Moreover, it is possible to incorporate information from previous iterations even though we did not pursue this approach in our experiments.
\end{remark}

\subsection{Non-smooth convex  objective}\label{ssec:non-smooth_method}
Here $F_y \colon \PrimS \to \Real$ in \cref{eq:FminParam0} is convex, but not necessarily differentiable. 
We specialize to the case $F_y = f_y + g_y$ where $f_y \colon \PrimS \to \Real$ and $g_y \colon \PrimS \to \RealExt$ are convex but only $f_y$ is differentiable and $\nabla f_y \colon \PrimS \to \PrimS$ is $\beta^{-1}$-Lipschitz continuous, we seek to solve 
\begin{equation}\label{eq:NonSmoothOptimSum}
    \min_{x \in \PrimS} f_y(x) + g_y(x)
\end{equation} 

A natural approach is to consider the deviation-based scheme in \cref{eq:non-smooth_iteration_grad,eq:non-smooth_iteration_prox} for solving the above problem.
Similarly as in~\cref{ssec:smooth_method}, the selection rules for the updates $\Delta x_n^1(y)$ and $\Delta x_n^2(y)$ are given by neural networks $\Psi^1$ and $\Psi^2$, i.e.
\begin{subequations}\label{eq:non-smooth_updates}
\begin{align}
\Delta x_n^1(y) 
  &\defeq \Psi^1_{\theta^1}\Bigl( 
       x_n(y), \nabla f_y\bigl(w_{n-1}(y)\bigr), 
       \Delta x_{n-1}^1(y) 
     \Bigr) \\
\Delta x_n^2(y) 
  &\defeq \Psi^2_{\theta^2}\Bigl(
       x_n(y), \nabla f_y\bigl(w_{n-1}(y)\bigr), 
       \Delta x_{n-1}^2(y), \Delta x_{n}^1(y)
     \Bigr).
\end{align}
\end{subequations}
The optimal solver is now obtained by training $\OptSolver_{N,x_0} \colon \OptParamSp \times \Theta \to \PrimS$ as in \cref{eq:TrainingEmp}. 
To ensure convergence as in \cref{eq:Convergence}, the resulting learned updates need to satisfy  \cref{eq:non-smooth_increment_bounds} in  
\cref{thm:non-smooth_main} according to \cref{cor:non-smooth}.
Similar to the smooth case in \cref{ssec:smooth_method}, we achieve this by normalizing the output of the last hidden layers $h_n^1$ and $h_n^2$ in the two neural networks:
\begin{subequations}\label{eq:non-smooth_normalization}
\begin{align}
  \Delta x_n^1 &= \sqrt{\frac{ \tolna{} (2\beta- \gamma)}{\gamma }}  \frac{h_n^1}{\sqrt{\|h_n^1\|^2 +1}}  \norm{x_n - x_{n-1} - \frac{\beta}{2\beta - \gamma} \Delta x_{n-1}^2}, \\
  \Delta x_n^2 &= \sqrt{\gamma(2\beta - \gamma)\tolno{}}  \frac{h_n^2}{\sqrt{\|h_n^2\|^2 + 1}}   \norm{\nabla f_y\p{w_n} - \nabla f_y\p{w_{n-1}} - \frac{1}{\beta} \p{x_n - w_{n-1}}}.
\end{align}
\end{subequations}
As before, $\tolna{}, \tolno{} \in [0,1)$ control the sizes of the learned deviations in the feasible region.

\section{Numerical experiments}\label{sec:applications}
This section evaluates performance of the proposed learned optimizers for solving minimization problems arising in tomographic image reconstruction. 

\subsection{Variational regularization in tomographic imaging}
Tomography is a collection of techniques that seek to visualize the interior structure of an object by probing it with penetrating particles/waves from different directions.  
The most well-known example is \acf{CT} in medical imaging.
A patient is here scanned by X-rays from different directions and the aim is to recover a 2D/3D image of the interior anatomy.

Tomographic imaging inevitably leads to an inverse problem since the interior structure $x \in \PrimS$ (2D/3D image) is only indirectly observed through data $y \in \OptParamSp$. 
This can be formalized as solving an equation
\begin{equation}\label{eq:InvProb}
  y = A x + \mathrm{noise} 
\end{equation}  
where $A \colon \PrimS \to \OptParamSp$ models how data is generated in absence of noise.
$A$ can be taken as a linear operator (ray transform) if one adopts a simplified model for how X-rays interact with tissue and properly pre-processes raw sensor data. 

Tomographic imaging problems are unfortunately often ill-posed.
This means that a solution procedure that seeks to maximize data consistency, e.g.\@ by minimizing $x \mapsto \|A x -y\|^2$, will be unstable. 
Variational regularization addresses these issues by adjusting the need for data consistency against the need to suppress unwanted features (over-fitting).
Stated formally, instead of trying to solve \cref{eq:InvProb}, one solves 
\begin{equation}\label{eq:VarReg}
    \hat{x} \in \argmin_{x \in \PrimS} \Big\{ \|A x -y\|^2 + \RegFunc_{\lambda}(x) \Big\}.
\end{equation}
Here, $\RegFunc_{\lambda} \colon \PrimS \to \Real$ (regularization functional) stabilizes the recovery procedure, typically by enforcing certain regularity. 
\begin{remark}
Much effort over the last three decades has been devoted to determining mathematical properties of solutions to \cref{eq:VarReg} for various choices of $\RegFunc_{\lambda}$.
Such analysis is typically performed in a non-discretized setting where both $x$ (2D/3D image) and $y$ (data)  resides in infinite-dimensional function spaces, see \cite{Scherzer:2009aa,Burger:2013aa,Benning:2018aa} for extensive surveys.
\end{remark}

\paragraph{Characteristics of \cref{eq:VarReg}}
The minimization in \cref{eq:VarReg} is of the form in  \cref{eq:FminParam0} with tomographic data $y \in \OptParamSp$ as the additional variable. 
In our case, the objective in \cref{eq:VarReg} has a natural decomposition $F_y = f_y + g$ where 
\[ 
f_y(x) \defeq \|A x -y\|^2 
\quad\text{and}\quad
g(x) \defeq \RegFunc_{\lambda}(x).
\]
Note here that $f_y \colon \PrimS \to \Real$ is differentiable whenever $A$ is differentiable, which in particular is the case when $A$ is linear.
Properties of $g \colon \PrimS \to \Real$ will depend on the choice of regularization functional, but it will be proper, convex and lower semi-continuous.

Next, in tomographic imaging, both $x$ and $y$ are high-dimensional arrays after discretization.
In the 2D setting that is considered in \cref{sec:Results}, $x$ represents $512 \times 512$ pixel 2D coronal image slices.
These are obtained from normal-dose \ac{CT} scans of the human abdomen provided by Mayo Clinic for the AAPM Low Dose CT Grand Challenge \cite{McCollough:2017aa}
and used as input for simulating X-ray parallel-beam tomographic data.
The latter is from 1000 equidistributed source positions  and 1000 detector elements with $5\%$ additive Gaussian noise, i.e.\@ data $y$ is a $10^6$-dimensional array.
In summary, even though we consider a 2D setting, \cref{eq:VarReg} is a large-scale optimization problem. 

An additional aspect to consider is that most clinical imaging studies are performed in a time-critical setting. 
An informal rule of thumb for \ac{CT} imaging is that reconstructed images need to be available within 5~minutes after scanning\footnote{This would exclude usages of \ac{CT} in trauma where reconstructions need to be available instantaneously.}.
Hence, applicability of variational regularization in clinical \ac{CT} rests to large extent on the ability to (approximately) solve \cref{eq:VarReg} within this time-frame, so one can typically only afford about 10  evaluations of $A$ and $A^{\top}$.

\subsection{Smoothed total-variation regularization} \label{ssec:smooth_practical}
Total-variation regularization corresponds to choosing  $\RegFunc_{\lambda}(x) \defeq \lambda \|\nabla x\|_1$ in \cref{eq:VarReg} for some $\lambda>0$.
Using such a regularization functional is popular since it preserves edges \cite{rudin1992nonlinear,Caselles:2015aa}.
When $x$ is discretized, the gradient $\nabla$ is typically replaced by the linear operator $D \colon \PrimS \to \PrimS \times \PrimS$ that computes differences in pixel values along vertical and horizontal directions.

A computational drawback with total variation regularization is that it results in minimizing a non-smooth objective. 
A common remedy is therefore to replace the 1-norm with the Huber function, i.e., choosing $\RegFunc_{\lambda}(x) \defeq \lambda H_{\delta}(D x)$ where $\delta > 0$, $x = \p{x_i}_{i \in I}$, and
\begin{equation}
    H_{\delta}(x) \defeq \sum_{i \in I} h_{\delta}(x_i), 
    \quad\text{with}\quad
    h_{\delta}(x_i) \defeq \begin{cases} \dfrac{1}{2\delta} x_i^2 & \quad \text{if } |x_i| < \delta \\[0.5em]
    |x_i| - \dfrac{\delta}{2} & \quad \text{otherwise.}
    \end{cases}
\end{equation}
This results in a smooth objective as \cref{eq:VarReg} now reads as 
\begin{equation}\label{eq:smooth_problem}
    \hat{x} \defeq \argmin_{x\in\PrimS} \|Ax -y\|^2 + \lambda H_{\delta}(D x),
\end{equation}
which corresponds to \cref{eq:FminParam0} with $F_y(x) = f_y(x) + g(x)$ where
\begin{equation}
 f_y(x) \defeq \norm{Ax - y}^2
 \quad\text{and}\quad
 g(x) \defeq \lambda H_{\delta}(Dx),
\end{equation}
so
\begin{equation}
    \nabla F_y(x) = \nabla f_y(x) + \nabla g(x) = 2A^{\top}(Ax-y) + \lambda D^\top \nabla(H)_{\delta} (Dx).
\end{equation}
One may furthermore normalize the linear operator $A$, i.e.,  $\norm{A} = 1$.
Then, $\nabla F_y$ is Lipschitz continuous with constant $\beta^{-1} = 2\|A\|^2  + \frac{\lambda}{\delta}\|D\|^2= 2 + 8 \lambda/\delta$. 

The tests in \cref{sec:Results} compare various deviation-based schemes for solving \cref{eq:smooth_problem} with $\delta = 0.01$ (Huber parameter) and $\lambda = 0.0015$ (regularization parameter).
The performance of the learned deviation-based scheme in \cref{ssec:smooth_method} is evaluated for different values of the normalization parameter $\tols{}$ in \cref{eq:smooth_normalization}. 

\subsection{Sparsity-promoting regularization} \label{ssec:non-smooth_practical}
Sparsity is another powerful method for regularizing ill-posed inverse problems, and in particular those that are under-sampled \cite{Fornasier:2015aa}.
A typical choice as regularizing functional in \cref{eq:VarReg} is to pick $\RegFunc_{\lambda}(x) = \lambda \|Wx\|_1$ where $\lambda>0$, so we end up with the following non-smooth minimization problem:
\begin{equation}\label{eq:problem}
    \hat{x} \defeq \argmin_x \|Ax -y\|^2 + \lambda \|Wx\|_1.
\end{equation}
In the above, $x \mapsto Wx$ is a compression that preserves essential features (sparsifying transform).
For the tests in \cref{sec:Results}, we select it as an orthogonal wavelet transform given by Symlets with filter length 10 and 5 scale levels. The regularization parameter is set to $\lambda = 0.0005$.

The minimization in \cref{eq:problem} corresponds to \cref{eq:FminParam0} with a non-smooth objective $F_y(x) = f_y(x) + g(x)$ where
\begin{equation}
 f_y(x) \defeq \norm{Ax - y}^2
 \quad\text{and}\quad
 g(x) \defeq \|Wx\|_1.
\end{equation}
As in \cref{ssec:smooth_practical}, $A$ is normalized, so $\|A\| = 1$, and $\nabla f_y(x) = 2A^{\top}(Ax-y)$ is Lipschitz continuous with constant $\beta^{-1} = 2$. 
Hence, $f_y$ and $g$ satisfy the assumptions in \cref{sec:non-smooth} and one can solve the problem by using the method proposed in  \cref{ssec:non-smooth_method}.
To proceed, we use \cite[Proposition 24.14] {BauschkeCombettes:2017} to compute the proximal in \cref{eq:non-smooth_iteration_prox}:
\begin{equation}
    \Prox{\lambda\gamma_n}{g}(x) = x + \frac{1}{\mu} W^\top \left( \Prox{\mu\lambda\gamma_n}{\| \cdot \|_1}(Wx) - Wx \right)
\end{equation}
where $\Prox{\gamma}{\| \cdot \|_1} (x) = \operatorname{sign}(x) \cdot \max(|x| - \gamma, 0)$ is a proximal of $l_1$ norm and $\mu$ is such that $\mu \operatorname{Id}= W W^\top$.

The tests in \cref{sec:Results} compare various deviation-based schemes for solving \cref{eq:problem} with gradient descent step-size $\gamma_n = 1 / (2 \|A\|^2)= 0.5$.
The performance of the learned deviation-based scheme in \cref{ssec:non-smooth_method} is evaluated for different values of the normalization parameters $\tolna{} = \tolno{} = \tols{}$ in \cref{eq:non-smooth_normalization}.

\subsection{Baseline methods}
In the case of smooth optimization (i.e.\@ solving \cref{eq:smooth_problem}), we compare our method to the steepest gradient descent and Nesterov's accelerated scheme \cite{Nesterov:1983}. For the steepest gradient descent, the updates are the same as in \cref{eq:smooth_iteration}, but with $\Delta x_n = 0$:
\begin{equation}
    x_{n+1} \defeq x_n - \beta\nabla f\p{x_n}
\end{equation}
In Nesterov's method the updates are:
\begin{subequations}\label{eq:nesterov_iteration}
\begin{align}
    x_{n} &\defeq w_n - \beta \nabla f\p{w_n}, \\
    t_{n+1} &\defeq \frac{1 + \sqrt{(1+4t_{n}^2)}}{2}, \\
    w_{n+1} &\defeq x_{n} + \frac{t_{n}-1}{t_{n+1}}(x_{n} - x_{n-1}),
\end{align}
\end{subequations}
where $t_0\defeq1$.

In the non-smooth case (i.e.\@ solving \cref{eq:problem}), we compare our method against
\ac{ISTA} \cite{Goldstein:1964} and \ac{FISTA} \cite{BeckTeboulle:2009}. 
Iterates in \ac{ISTA} are updated according to
\begin{subequations}\label{eq:ista_iteration}
\begin{align}
  x_{n+1} &\defeq \Prox{\gamma_n}{g}\p{x_n - \gamma_n \nabla f\p{x_n}}. \label{eq:ista_iteration_prox}
\end{align}
\end{subequations}
This is equivalent to updates in  \cref{eq:non-smooth_iteration_grad,eq:non-smooth_iteration_prox} with $\Delta x_n^1 = \Delta x_n^2 = 0$.
In \ac{FISTA}, updates are given as
\begin{subequations}\label{eq:fista_iteration}
\begin{align}
    x_{n} &\defeq \Prox{\gamma_n}{g}\p{w_n - \gamma_n \nabla f\p{w_n}}, \\
    t_{n+1} &\defeq \frac{1 + \sqrt{(1+4t_{n}^2)}}{2}, \\
    w_{n+1} &\defeq x_{n} + \frac{t_{n}-1}{t_{n+1}}(x_{n} - x_{n-1}),
\end{align}
\end{subequations}
where $t_0\defeq1$.
Finally, we do not provide a comparison to \ac{LISTA} \cite{GregorLecun:2010}, because an input of dimension $n$ it requires training $n^2$ parameters. 
The memory requirements for this method are prohibitively large for the examples in \cref{sec:Results} since $n= 512\times 512$.

\subsection{Implementation}
The architecture for the neural networks in \cref{eq:DNNdeviation,eq:non-smooth_updates} is similar to the networks representing the learned updates in \cite{Adler:2018aa}. 
Thus, it consists of three convolutional layers (two hidden and one output layer) with $3\times 3$ convolutional kernels. 
The hidden layers have 32 filters that are followed by instance normalization and leaky ReLU as an activation function. 
Instead of performing explicit pre-processing of the input data, we use additional instance normalization before the hidden layers. 
This results in a network with a total of $1 \cdot 10 \cdot 32 + 32 \cdot 10 \cdot 32 + 32 \cdot 10 \cdot 1 =10\,880$ free parameters that need to be set during training.

To train the networks, we use the unsupervised loss function $F(x_N)$, where $N$ is the total number of iterations. However, during the evaluation, we run much more iterations to verify the convergence, despite the fact that the network was not trained for this scenario. During the training, we sample $N$ uniformly between 10 and 20. Randomizing the number of iterations is necessary to ensure a smooth transition between the scenario for which the model is trained and for which it is not. Since the loss function $F(x_n)$ is not guaranteed to decrease monotonically, we observed that fixing the number of iterations to 10 during the training results in a slight increase of the loss after 10 iterations, before it starts to decrease again.

The neural networks are implemented in Tensorflow \cite{abadi2016tensorflow} and trained for $10^5$ iterations using the Adam optimizer with learning rate $10^{-3}$ and batch size 1. This takes about three days on a workstation with a single GeForce RTX~2080~Ti GPU. However, for most presented models $3 \cdot 10^4$ training iterations are sufficient.
Tomography-related operations necessary for evaluating the data consistency along with bindings to TensorFlow are implemented in ODL \cite{odl} with ASTRA \cite{van2016fast} as the computational back-end. We make the code publicly available\footnote{\href{https://github.com/JevgenijaAksjonova/Deep-Optimization}{github.com/JevgenijaAksjonova/Deep-Optimization}}.

\subsection{Results}\label{sec:Results}
All the solution algorithms initialize iterations to zero image $x_0 = 0$. 
Next, all deviation-based schemes with learned selection rules have been trained on data from nine of ten patients. The 210 images corresponding to the last patient are used for testing.

\subsubsection{Smoothed total variation regularization (smooth optimization)}
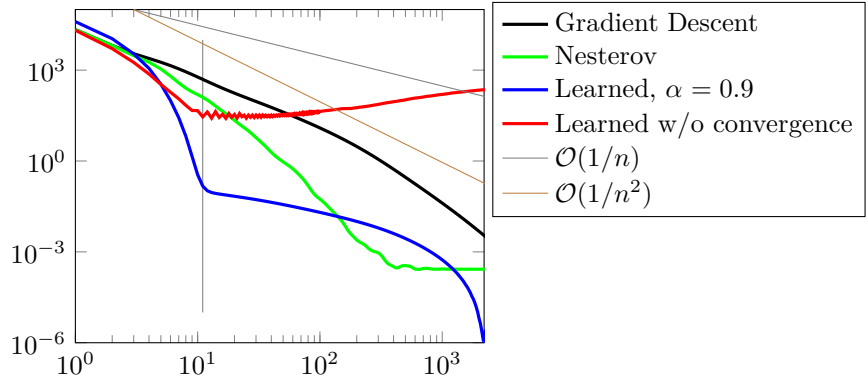
\begin{figure}
  \centering
  \begin{tikzpicture}
      \begin{loglogaxis}
      [
      ymin=0.000001,
      ymax=100000,
      xmin=1,
      xmax=2200,
      height=6cm,
      legend style = {anchor=west, at={(1.02, 0.70)}},
      legend cell align={left},
      no markers,
      cycle list={
        {black, solid, very thick},
        {green, solid, very thick},
        {blue, solid, very thick},
        {red, solid, very thick},
        {gray, solid},
        {brown, solid}}
      ]
      \draw[gray] (11, .00001) -- (11, 10000);
      \addplot table {plot_data/smooth_case/steep_desc.tab};
      \addplot table {plot_data/smooth_case/nesterov.tab};
      \addplot table {plot_data/smooth_case/learned_a.9.tab};
      \addplot table {plot_data/smooth_case/learned_div.tab};
      \addplot (3, 100000) -- (30, 10000) -- (300, 1000) -- (3000, 100);
      \addplot (3, 100000) -- (30, 1000) -- (300, 10) -- (3000, .1);
      \legend{Gradient Descent\\ Nesterov \\ Learned, $\tols{}=0.9$ \\ Learned w/o convergence\\ $\mathcal{O}(1/n)$\\ $\mathcal{O}(1/n^2)$\\}
      \end{loglogaxis}
  \end{tikzpicture}
  \caption{Plot of $F\p{x_n} - F^*$ as a function of the iteration number for gradient descent, Nesterov's accelerated method, the learned scheme in \cref{ssec:smooth_method} with $\tols{}=0.9$, and a variant of latter without the normalization given by~\cref{eq:smooth_normalization}.}
  \label{fig:smooth_results}
\end{figure}

In \cref{fig:smooth_results} we visualize the performance of different methods averaged over the set of test images. We identify the minimal objective value $F^*$ achieved among all the methods and plot the difference $F(x_n) - F^*$ to the minimal value in logarithmic scale depending on the iteration.

We present results for the following methods: the gradient descent, Nesterov's accelerated method, and the learned scheme described in \cref{ssec:smooth_method} with the normalization parameter $\tols{} = 0.9$. For comparison, we also include a variant of the learned scheme without the normalization given by~\cref{eq:smooth_normalization}.

We use 32-bit single-precision floating-point numbers in our experiments (that represents about 7 decimal digits) and we average over 210 images. The final loss for each image is in most cases a 3 digit number. Therefore we expect the results be precise only up to 5 digits, which is up to and including $10^{-2}$. 

As we can see, learned schemes perform better than the baseline methods during the first 10 iterations (marked with a grey vertical line). This result is expected since it corresponds to the objective set during the training. We also evaluate a variant of the learned scheme that uses a network without the normalization in \cref{eq:smooth_normalization} and, hence, does not have convergence guarantees. Not only does it achieve a worse performance during the first 10 iterations, but it also quickly diverges afterwards.

\begin{figure}
  \centering
  \begin{tikzpicture}
      \begin{loglogaxis}
      [
      ymin=0.000001,
      ymax=100000,
      xmin=1,
      xmax=2200,
      height=6cm,
      legend style = {anchor=west, at={(1.02, 0.70)}},
      legend cell align={left},
      no markers,
      cycle list={
        {black, solid, very thick},
        {green, solid, very thick},
        {blue, solid, very thick},
        {red, solid, very thick},
        {gray, solid},
        {brown, solid}}
      ]
      \draw[gray] (11, .00001) -- (11, 10000);
      \addplot table {plot_data/smooth_case/steep_desc.tab};
      \addplot table {plot_data/smooth_case/learned_e.001.tab};
      \addplot table {plot_data/smooth_case/learned_a.9.tab};
      \addplot table {plot_data/smooth_case/learned_a.5.tab};
      \addplot (3, 100000) -- (30, 10000) -- (300, 1000) -- (3000, 100);
      \addplot (3, 100000) -- (30, 1000) -- (300, 10) -- (3000, .1);
      \legend{Gradient Descent \\ Learned,  $\tols{}=0.999$\\ Learned, $\tols{}=0.9$\\ Learned, $\tols{}=0.5$\\ $\mathcal{O}(1/n)$\\ $\mathcal{O}(1/n^2)$\\}
      \end{loglogaxis}
  \end{tikzpicture}
  \caption{Plot of $F\p{x_n} - F^*$ as a function of the iteration number for gradient descent and the learned scheme in \cref{ssec:smooth_method} that enforces convergence through  \cref{eq:smooth_normalization} for different values of $\tols{}$.}
  \label{fig:smooth_ablation}
\end{figure}
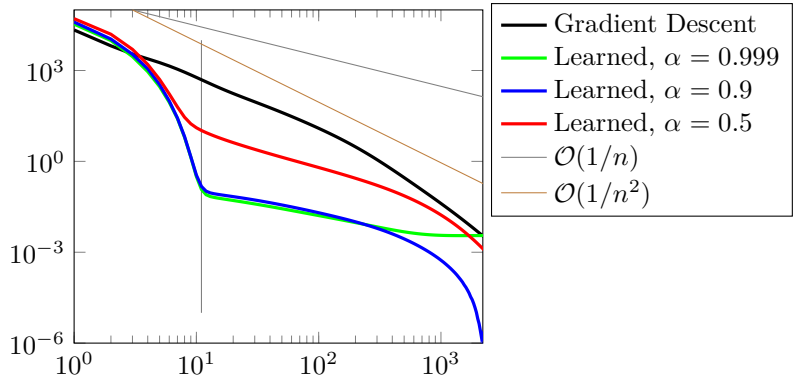

In~\cref{fig:smooth_ablation}, we evaluate the performance of the learned scheme for different choices of the normalization parameter $\tols{}$, which controls the size of the learned deviation $\Delta x_n$. The largest value $\tols{} =0.999$ leads to the best performance during the first 10 iterations, however, $\tols{} = 0.9$ performs almost as good and converges much faster.  Decreasing $\tols{}$ further makes the method  approach the steepest gradient descent, by enforcing convergence and sacrificing performance during the first 10 iterations.

\subsubsection{Sparsity-promoting regularization (non-smooth optimization)}
\begin{figure}
  \centering
  \begin{tikzpicture}
    \begin{loglogaxis}
      [
      ymin=0.000001,
      ymax=100000,
      xmin=1,
      xmax=650,
      height=6cm,
      legend style = {anchor=west, at={(1.02, 0.70)}},
      legend cell align={left},
      no markers,
      cycle list={
        {black, solid, very thick},
        {green, solid, very thick},
        {blue, solid, very thick},
        {red, solid, very thick},
        {gray, solid},
        {brown, solid}}
      ]
      \draw[gray] (11, .00001) -- (11, 10000);
      \addplot table {plot_data/nonsmooth_case/ista.tab};
      \addplot table {plot_data/nonsmooth_case/fista.tab};
      \addplot table {plot_data/nonsmooth_case/learned_dx2_a.5.tab};
      \addplot table {plot_data/nonsmooth_case/learned_dx2_div.tab};
      \addplot (3, 100000) -- (30, 10000) -- (300, 1000) -- (3000, 100);
      \addplot (3, 100000) -- (30, 1000) -- (300, 10) -- (3000, .1);
      \legend{ISTA\\ FISTA \\ Learned, $\tols{}=0.5$\\ Learned w/o convergence \\  $\mathcal{O}(1/n)$\\ $\mathcal{O}(1/n^2)$\\}
    \end{loglogaxis}
  \end{tikzpicture}

  \caption{Plot of $F\p{x_n} - F^*$ as a function of the iteration number for \ac{ISTA} \cite{Goldstein:1964}, \ac{FISTA} \cite{BeckTeboulle:2009}, the learned scheme in \cref{ssec:non-smooth_method} with $\tols{} = 0.5$, and a variant of latter without the normalization given by~\cref{eq:non-smooth_normalization}.
  }
  \label{fig:non-smooth_results}
\end{figure}
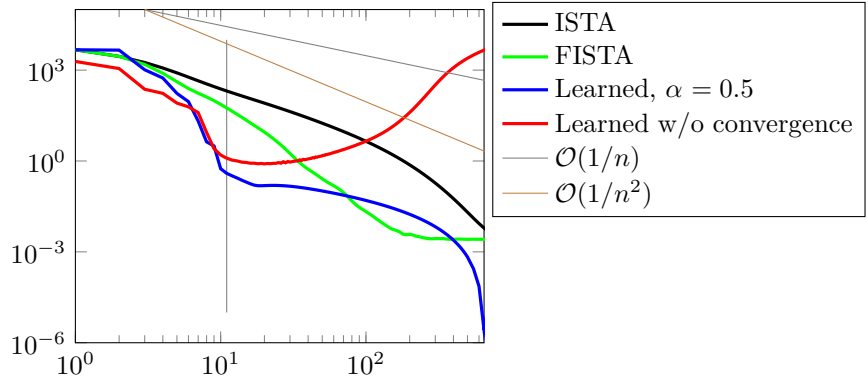

\Cref{fig:non-smooth_results} visualizes the performance of different methods averaged over the set of test images. We identified the minimal objective value $F^*$ achieved among all the methods and plot the difference $F(x_n) - F^*$ to the minimal value in logarithmic scale depending on the iteration.

We can see that, during the first 10 iterations (marked with a grey vertical line), the learned methods perform significantly better than the two baseline methods: \ac{ISTA} \cite{Goldstein:1964} and \ac{FISTA} \cite{BeckTeboulle:2009}.
Moreover, the normalization step \cref{eq:non-smooth_normalization} not only ensures convergence, it also improves the performance after 10 iterations.

\begin{figure}
  \centering
  \begin{tikzpicture}
    \begin{loglogaxis}
      [
      ymin=0.000001,
      ymax=100000,
      xmin=1,
      xmax=650,
      height=6cm,
      legend style = {anchor=west, at={(1.02, 0.70)}},
      legend cell align={left},
      no markers,
      cycle list={
        {black, solid, very thick},
        {green, solid, very thick},
        {blue, solid, very thick},
        {gray, solid},
        {brown, solid}}
      ]
      \draw[gray] (11, .00001) -- (11, 10000);
      \addplot table {plot_data/nonsmooth_case/ista.tab};
      \addplot table  {plot_data/nonsmooth_case/learned_dx2_e.001.tab};
      \addplot table {plot_data/nonsmooth_case/learned_dx2_a.5.tab};
      \addplot (3, 100000) -- (30, 10000) -- (300, 1000) -- (3000, 100);
      \addplot (3, 100000) -- (30, 1000) -- (300, 10) -- (3000, .1);
      \legend{ISTA\\ Learned,  $\tols{} = 0.999$\\ Learned,  $\tols{} = 0.5$\\  $\mathcal{O}(1/n)$\\ $\mathcal{O}(1/n^2)$\\}
    \end{loglogaxis}
  \end{tikzpicture}

  \caption{Plot of $F\p{x_n} - F^*$ as a function of the iteration number for \ac{ISTA} \cite{Goldstein:1964} and the learned scheme in \cref{ssec:non-smooth_method} with convergent updates by  \cref{eq:non-smooth_normalization} for different values of $\tols{}$.}
  \label{fig:non-smooth_ablation}
\end{figure}
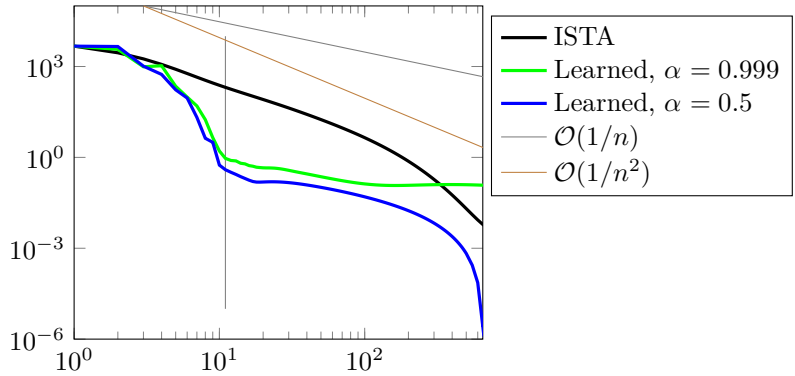

Next, we show the effect of choosing different normalization parameters $\tols{}$ in~\cref{fig:non-smooth_ablation}. As in smooth optimization, decreasing $\tols{}$ improves convergence. Moreover, in this case $\tols{} = 0.5$ leads to the best performance among the tested methods during the first 10 iterations as well. 

\section{Conclusion}

Our goal was to construct deep-learning accelerated algorithms for convex optimization, and we began by noting that these heavily over-parametrized algorithms do not fit into classical optimization schemes, which only use a few parameters.

To do this, we proved convergence of two convex optimization algorithms, one for smooth optimization and one for non-smooth. Instead of showing their convergence under some specific parameter choice, we showed convergence whenever the updates are chosen in particular sets of full dimension. Given that these sets are rather large, there is hence an enormous number of options to choose from. To make the best choice in any situation we used a neural network and trained it to pick the updates for optimal convergence speed.

We applied our algorithms to the problem of regularized image reconstruction in \ac{CT} and showed that our algorithm out-performs classical algorithms, especially when only a few iterations are applied. Furthermore, our algorithm converged to the exact solution, as expected.

We hope that this work will open new venues in combining deep learning with provable convergence, and we see several interesting open problems arising from it. For one, our algorithms are only applicable to a subset of all convex problems and it would be particularly interesting to find a primal-dual scheme which can be combined with deep learning. Another interesting research direction is to investigate other definitions of \emph{the best} optimization algorithm, for example one highly sought-after feature is the ability to train our network to converge to a given accuracy as quickly as possible. Finally, one could combine the deterministic acceleration in the spirit of Nesterov~\cite{Nesterov:1983} or \cite{BeckTeboulle:2009} with the deep learning techniques introduced in this paper and find an optimal trade-off between the two.

\section{Acknowledgments}
Sebastian Banert was supported by the Swedish Foundation of Strategic Research grant AM13-0049 and by the \ac{WASP}.
Jevgenija Rudzusika was supported by the Swedish Foundation of
Strategic Research grant AM13-0049, grant from the VINNOVA Open Innovation Hub project 2015-06759, and by Philips Healthcare.
Jonas Adler was supported by the Swedish Foundation of
Strategic Research grant AM13-0049, Industrial PhD grant
ID14-0055, and by Elekta Instrument AB.
Finally, Ozan Öktem was supported by the Swedish Foundation of
Strategic Research grant AM13-0049.

We would like to thank Pooria Joulani and Pushmeet Kohli for reviewing and helping us improve the manuscript.

\printbibliography
\end{document}